\documentclass[12pt]{amsart}
\newcommand\real{{\mathrm I}\!{\mathrm R} }

\newcommand\rat{{\mathrm Q}\kern-.65em {}^{{}_/ }}

\newtheorem{corollary}{Corollary}
\newtheorem{remark}{Remark}

\newtheorem{theorem}{Theorem}
\newtheorem{lemma}{Lemma}

\begin{document}
\title{Equidistribution results for geodesic flows}
\author{Abdelhamid Amroun}
\maketitle
\address{\begin{center}
Universit\'e Paris Sud, D\'epartement de Math\'ematiques,
CNRS UMR 8628, 91405 Orsay Cedex France
\end{center}}

\begin{abstract} Using  the works of
Ma\~n\'e \cite{Ma} and Paternain \cite{Pat} we study 
the distribution of geodesic arcs with respect to equilibrium
states of the geodesic flow on a closed manifold, equipped with
a $\mathcal{C}^{\infty}$ Riemannian metric.
We prove large deviations lower and upper bounds and a contraction
principle for the geodesic flow in the space of probability measures
of the unit tangent bundle. We deduce a way of approximating
equilibrium states for continuous potentials.
\end{abstract}
\section{Introduction}

Let $M$ be a closed and connected manifold equipped with a 
$C^{\infty }$ Riemannian metric. We study 
the distribution of geodesic arcs of $M$ with respect to equilibrium states.  
We prove large deviations lower and upper bounds for the
geodesic flow in the space of probability measures of the unit
tangent bundle. More precisely, we consider 
Lebesgue measures supported on a finite number of 
geodesic arcs and show that they define a process which satisfy a large
deviation principle with action function given by the topological
pressure. As an application, we obtain equidistribution results 
which describe the proportion of
geodesic arcs which support Lebesgue measures close to equilibrium states.
We show that this proportion converges exponentially fast to
one when the length of the geodesic arcs tends to infinity.
We also prove a contraction principle for these probability
measures, which is a large deviation theorem with constraints.
This work is based on two remarkable formulas due to
Ma\~n\'e \cite{Ma} and Paternain \cite{Pat} which
characterize the topological entropy and pressure as a growth rate
of the number of the geodesic arcs (see Theorem $8$ and Theorem $9$).
On the other hand, the technics of convex analysis and large
deviations in \cite{Ki} and \cite{DZ} were particulary usefull for this work.  
We adapt and extend certain of the general arguments in \cite{Ki} to our
situation. 

We have two important situations where the results of the paper apply.
The first situation concerns the class of manifolds with negative
curvature. In this case, it is well
known that for any H\"older continuous potential there exists a unique
equilibrium state. There are three well known invariant measures in this
setting. The Bowen-Margulis measure, which is the equilibrium state
(a measure of maximal entropy) corresponding to constant
potentials. The harmonic measure which
corresponds to the potential $\frac{d}{dt}|_{t=0}(K\circ
\widetilde{\varphi}^{t})$  
where $K$ is the Poisson kernel and $\widetilde{\varphi}^{t}$ the geodesic flow
of $S\widetilde{M}$, where $\widetilde{M}$ is the universal cover of
the manifold $M$. The  Liouville measure which is the
equilibrium state of the potential $\frac{d}{dt}|_{t=0} \det \left(
d\varphi_{t} |_{E^{s}}\right )$ where $E^{s}$ is the stable tangent
bundle of $SM$ (see \cite{ch} and \cite{che} for more details).
For the ``Liouville potential'', we obtain that the
geodesic arcs are uniformly distributed with respect to the Liouville measure. 

The second situation deals with the more general class of Riemannian manifolds
of nonpositive curvature which are Rank $1$. Then by a result of
Knieper \cite{Kn} we know that there exists a uniquely determined
invariant measure of maximal entropy for the geodesic flow. 
But it is not known up to now which class of raisonable potentials
admit a unique equilibrium state and this question remains open for
Rank $1$ manifolds. However, our results give a way of approximating
equilibrium states of the geodesic flow of such manifolds. Indeed,
these results are applicable everywhere where Ma\~n\'e's and
Paternain's formulas hold.  

Finally, I'm very grateful to Fran\c{c}ois Ledrappier for helpful
conversations.
 
\section{Main results}
\subsection{Preliminaries and notations}
Let $M$ be a closed and connected manifold equipped with a 
$C^{\infty}$ Riemannian metric and $\phi : SM \rightarrow SM$ be the
geodesic flow on the unit tangent bundle $SM$. We assume that $M$ has
volume one, $\int_{M}dx=1$, where $dx$ is the volume form induced by
the Riemannian metric of $M$. 

We denote by
$\mathcal{P}(SM)$ the space of probability measures on $SM$
equipped with the weak star topology. 
Let $\mathcal{P}_{inv}(SM)$ be the
subset of $\mathcal{P}(SM)$ of invariant probability measures. Given a
potential $F\in C_{\real}(SM)$, the topological pressure of $F$ is the
number defined by the variational principle \cite{Wa},
\begin{equation}
P(F)=\sup_{m\in \mathcal{P}_{inv}(SM)}(h(m)+\int_{SM} Fdm),
\end{equation}
where $h(m)$ is the entropy of $m$. For $F=0$ this reduces to
\[
P(0)=\sup_{m\in
  \mathcal{P}_{inv}(SM)}h(m):=h_{top}
\]
where $h_{top}$ is the topological entropy of the geodesic flow. 

An equilibrium state for $F$, is a measure $m\in
\mathcal{P}_{inv}(SM)$ which achieves the maximum in $(1)$,
\[
h(m)+\int_{SM} Fdm=P(F).
\]
We denote by $\mathcal{P}_{e}(F)$ the subset of
$\mathcal{P}_{inv}(SM)$ of equilibrium states corresponding to $F$. By
a result of Newhouse \cite{Ne}, since the metric is $C^{\infty}$, the
entropy map $m\rightarrow h(m)$ is upper semicontinuous. Then $h_{top}<\infty$ 
and consequently, the set $\mathcal{P}_{e}(F)$
is a nonempty closed, compact, convex subset of $\mathcal{P}(SM)$ \cite{Wa}.
 
We define the functional $Q_{F}$ on $C_{\real}(SM)$ based on the
potential $F$ by,
\begin{equation}
Q_{F}(\omega):=P(F+\omega)-P(F).
\end{equation}
By definition, $Q_{F}$ is continuous
on continuous functions (see Lemma $1$ and Remark $1$).
Sometimes we will simply write $Q$, if there is no confusion to be been afraid.

We set for any probability measure $\mu$ on $SM$,
\begin{equation}
J_{F}(\mu):=\sup_{\omega}(\int \omega d\mu- Q_{F}(\omega)),
\end{equation}
where the $\sup$ is taken over the space of continuous functions $\omega$ on
$SM$. Observe that since $Q_{F}(0)=0$, then $J_{F}$ is a non negative
functional and clearly is lower semicontinuous.
We will see that (Lemma $1$ and Remark $1$) that
$\mathcal{P}_{e}(F)=\{J_{F}=0\}$. 
Again, if there is no ambiguity we write $J$ instead of $J_{F}$.

By duality, we have
\begin{equation}
Q_{F}(\omega)=\sup_{\mu \in \mathcal{P}(SM)}(\int \omega d\mu- J_{F}(\mu)).
\end{equation}
For any set $E\subset \mathcal{P}(SM)$ put
\[
J_{F}(E):=\inf_{\mu \in E}J_{F}(\mu).
\]

Given $x$ and $y$ in $M$, we denote by $\gamma
_{xy}:\left[ 0,l(\gamma _{xy})\right] \rightarrow M$ a unit speed geodesic
arc joining $x$ to $y$ with length $l(\gamma _{xy})$.
For any $\delta >0$ and $T>0$ we set, 
\begin{eqnarray*}
G_{\delta,T}(x,y)&:=&\{\gamma _{xy}: T-\delta <l(\gamma _{xy})\leq T \} \
and,\\
G_{T}(x,y)&:=&\{\gamma _{xy}: l(\gamma _{xy})\leq T \}.
\end{eqnarray*}

Recall from \cite{Be} that given
$T>0$, the set of geodesic arcs with length $\leq T$ is finite and
its cardinality is locally constant for an open full Lebesgue measure
subset of $M\times M$ (see also \cite{Pat} p53 for a proof
using Sard's theorem). By a result of Burns K and Gutkin E \cite{BG},
the growth of $\# G_{T}(x,y)$ and a positive 
topological entropy are related to the condition of ``insecurity'' of
the manifold.

The integral $\int_{\gamma _{xy}} \omega$ of a function $\omega$ over the
geodesic arc $\gamma _{xy}$ is defined by
\[
\int_{\gamma _{xy}}\omega:=
\int_{0}^{l(\gamma_{xy})}\omega(\phi_{s}(\dot{\gamma}_{xy}(0))ds. 
\]
Here $\dot{\gamma}_{xy}(0)$ is the initial condition of the geodesic
$\gamma_{xy}$.  
The Lebesgue measure $\delta_{\gamma _{xy}}$ with support in $\gamma
_{xy}$ is now defined by, 
\[
\int_{SM}\omega d\delta_{\gamma
  _{xy}}:=\frac{1}{l(\gamma_{xy})}\int_{\gamma _{xy}}\omega. 
\]
We will write $G_{\delta,T}$ and $G_{T}$ for simplicity, since the
dependence in $(x,y)$ will be always clear. 
\subsection{The results}
Consider a real continuous potential $F$ defined on $SM$. 
For any Borel subset $E$ of $\mathcal{P}(SM)$ we set,
\begin{equation}
\nu_{T}(E):=\frac{\int_{M\times M}(\sum_{\gamma_{xy}\in
    G_{\delta,T}:\delta_{\gamma _{xy}}\in E}e^{\int_{\gamma_{xy}}F})dxdy} 
{\int_{M\times M}(\sum_{\gamma_{xy}\in G_{\delta,T}}
e^{\int_{\gamma_{xy}}F})dxdy}.
\end{equation}
This defines a process $(\nu_{T})_{T>0}$ on the space $\mathcal{P}(SM)$. 
The first result gives large deviations bounds for this process, namely
\begin{eqnarray*}
\limsup_{T\rightarrow +\infty }\frac{1}{T}\log \nu_{T}(K)
&\leq&  -J(K)\\
\liminf_{T\rightarrow +\infty }\frac{1}{T}\log \nu_{T}(O)
&\geq&  -J(O),
\end{eqnarray*} 
respectively, for any closed subset $K$ and open subset $O$ of
$\mathcal{P}(SM)$.

Before we state the main theorem, we do the following assumption under which we
prove the lower bound part of the large deviation theorem. This
condition is well known when we deal with the lower bound part (see
\cite{Ki} and \cite{DZ}).

{\it There exists a countable set $\mathcal{C}:=\{g_{k},
  k\geq 1\}\subset C_{\real}(SM)$ of continuous fucntions such that
  their span is dense in 
 $C_{\real}(SM)$ with respect to the topology of uniforme convergence,
  $\|g_{k}\|=1$ for all $k$, and for all $\beta \in \real^{n}$ the
  potential $\sum_{k=1}^{n}\beta_{k}g_{k}$ has a unique equilibrium state.}

\begin{theorem}
Let $M$ be a closed and connected manifold equipped with a $C^{\infty }$
Riemannian metric and $F\in C_{\real}(SM)$. Then for any $\delta >0$ we have
\begin{enumerate}
\item For any closed subset $K$ of $\mathcal{P}(SM)$
\[
\limsup_{T\rightarrow +\infty }\frac{1}{T}\log \frac{\int \left(
\sum_{\gamma _{xy}\in G_{\delta,T}:
\delta_{\gamma _{xy}}\in K} 
e^{\int_{\gamma _{xy}}F}\right) dxdy}{\int \left( \sum_{\gamma
_{xy} \in G_{\delta,T}}e^{\int_{\gamma
_{xy}}F}\right) dxdy}\leq  -J(K).
\]
\item If for all $\beta \in \real^{n}$ and $g=(g_{1},\ldots,
  g_{n})\in \mathcal{C}^{n}$, $F+\beta \cdot g$ has a unique 
equilibrium state, then for any open subset $O$ of $\mathcal{P}(SM)$,  
\[
\liminf_{T\rightarrow +\infty }\frac{1}{T}\log \frac{\int \left(
\sum_{\gamma _{xy}\in G_{\delta,T}:
\delta_{\gamma _{xy}} \in O}
e^{\int_{\gamma _{xy}}F}\right) dxdy}{\int \left( \sum_{\gamma
_{xy} \in G_{\delta,T}}e^{\int_{\gamma
_{xy}}F}\right) dxdy}
\geq -J(O).
\]
\end{enumerate}
\end{theorem}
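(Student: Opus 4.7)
The plan is to follow the convex-analytic large deviations framework of Kifer, using the Mañé-Paternain formulas (Theorems 8 and 9 of the paper) as the key asymptotic input. The first step is to compute the normalized Laplace functional of $\nu_T$: for $\omega \in C_{\real}(SM)$, set
\[
Z_T(\omega) := \int_{\mathcal{P}(SM)} e^{T \int \omega \, d\mu} \, d\nu_T(\mu) = \frac{\int \sum_{\gamma_{xy} \in G_{\delta,T}} \exp\bigl(\int_{\gamma_{xy}} F + \tfrac{T}{l(\gamma_{xy})}\int_{\gamma_{xy}} \omega\bigr)\,dxdy}{\int \sum_{\gamma_{xy} \in G_{\delta,T}} \exp\bigl(\int_{\gamma_{xy}} F\bigr)\,dxdy}.
\]
Since $T - \delta < l(\gamma_{xy}) \leq T$, the factor $T/l(\gamma_{xy})$ differs from $1$ by $O(\delta/T)$, so that $(T/l(\gamma_{xy}) - 1)\int_{\gamma_{xy}} \omega$ is uniformly bounded by $\delta \|\omega\|_{\infty}$. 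Applying Theorems 8 and 9 to the numerator (with potential $F+\omega$) and to the denominator (with potential $F$) therefore gives
\[
\lim_{T\to\infty} \frac{1}{T}\log Z_T(\omega) \;=\; P(F+\omega) - P(F) \;=\; Q_F(\omega).
\]

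For the upper bound (1), I combine this identity with the exponential Chebyshev inequality. Because $SM$ is compact so is $\mathcal{P}(SM)$, and every closed $K$ is then compact. Fix $\epsilon > 0$ and a finite constant $L < J(K)$. For each $\mu \in K$, the definition of $J_F$ as the Legendre transform of $Q_F$ supplies $\omega_\mu \in C_{\real}(SM)$ with $\int \omega_\mu d\mu - Q_F(\omega_\mu) \geq L - \epsilon$. Weak-star continuity of $\mu' \mapsto \int \omega_\mu\,d\mu'$ provides a neighborhood $U_\mu$ on which $\int \omega_\mu d\mu' \geq \int \omega_\mu d\mu - \epsilon$. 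Covering $K$ by finitely many $U_{\mu_i}$ and applying the Chebyshev bound
\[
\nu_T(U_{\mu_i}) \;\leq\; e^{-T(\int \omega_{\mu_i} d\mu_i - \epsilon)} Z_T(\omega_{\mu_i}),
\]
then letting $T\to\infty$, $L \uparrow J(K)$ and $\epsilon \downarrow 0$ yields $\limsup_T \tfrac{1}{T}\log \nu_T(K) \leq -J(K)$.

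For the lower bound (2) I exploit the uniqueness hypothesis on $F + \beta \cdot g$, which by standard convex analysis is equivalent to differentiability of the convex map $\beta \mapsto Q_F(\beta \cdot g)$, with gradient equal to $(\int g_k \, d\mu_{\beta})_k$ at the unique equilibrium state $\mu_\beta$. Given an open $O$ and $\mu_0 \in O$ with $J(\mu_0) < \infty$, the plan is: (a) choose a near-optimal continuous function $\omega$ for $\mu_0$ in the variational definition of $J_F(\mu_0)$; (b) approximate $\omega$ in sup-norm by some finite linear combination $\beta \cdot g$, using the density of the span of $\mathcal{C}$; (c) use the hypothesis to identify the corresponding equilibrium state $\mu_\beta$, which by the envelope/differentiability relation lies close to $\mu_0$ in the weak-star topology and satisfies $J(\mu_\beta) = -Q_F(\beta \cdot g) + \int \beta\cdot g \, d\mu_\beta$, close to $J(\mu_0)$; (d) tilt $\nu_T$ by the exponential $e^{T\int \beta \cdot g\, d\mu}$ to form a new process $\tilde\nu_T$ whose Laplace functional, by step one, equals $P(F+\beta\cdot g+\omega')-P(F+\beta\cdot g)$. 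The upper bound applied to $\tilde\nu_T$ forces it to concentrate on $\mu_\beta$, and the standard change-of-measure/Jensen identity recovers $\nu_T(O) \geq e^{-TJ(\mu_\beta) - o(T)}$, which gives the claimed $\liminf_T \tfrac{1}{T}\log \nu_T(O) \geq -J(O)$ after sending $\mu_\beta \to \mu_0$ and then infimizing over $\mu_0 \in O$. The main obstacle is precisely step (c): verifying that the unique equilibrium states $\mu_\beta$ arising from tilts in the span of $\mathcal{C}$ form a sufficiently rich subset of $\{J_F < \infty\}$ to approximate an arbitrary $\mu_0 \in O$ both in weak-star position and in rate-function value, which is where the upper semicontinuity of entropy and the density of the span of $\{g_k\}$ are used in tandem.
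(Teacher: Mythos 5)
Your upper bound argument is essentially the paper's: both compute the exponential growth rate of the Laplace functional via Theorem~9, use compactness of $\mathcal{P}(SM)$ to extract a finite cover from the variational definition of $J_F$, and close with the exponential Chebyshev inequality; the paper just packages the cover as sets $K_i=\{m:\int\omega_i\,dm-Q(\omega_i)>J(K)-\epsilon\}$ and pushes the exponential weight $e^{l(\gamma_{xy})(\cdots)}$ directly into the sum over geodesics, while you phrase it through $Z_T(\omega)$, but the content is the same.

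For the lower bound you have correctly identified the tools (tilting by $\beta\cdot g$, uniqueness of the equilibrium state forcing concentration of the tilted process) but the step you flag as ``the main obstacle'' --- (c) --- is indeed a genuine gap, and the route you propose for it does not work. Approximating a near-optimal test function $\omega$ for $\mu_0$ in sup norm by $\beta\cdot g$ gives no control whatsoever on where the equilibrium state $\mu_\beta$ of $F+\beta\cdot g$ sits in $\mathcal{P}(SM)$: the map from potentials to equilibrium states is not sup-norm continuous in this generality, and $J_F$ is only lower semicontinuous, so neither the weak-star location of $\mu_\beta$ nor the value $J(\mu_\beta)$ is under control. What the paper does instead is never approximate $\omega$: it fixes $m_\epsilon\in O$ with $J(m_\epsilon)\le J(O)+\epsilon$, uses the metric $d(m,m')=\sum_k 2^{-k}|m(g_k)-m'(g_k)|$ to find a finite $n$ and radius $r$ so that the $d_n$-ball $O_{\epsilon,r}$ around $m_\epsilon$ is inside $O$, then works entirely in the finite-dimensional image under $g^{(n)}=(g_1,\dots,g_n)$, where Rockafellar's subgradient theorem (Theorem~23.4--23.5 of \cite{Roc}) produces $\beta_\varepsilon\in\real^n$ with $Q(\beta_\varepsilon\cdot g)=\beta_\varepsilon\cdot\alpha_\varepsilon-J_g(\alpha_\varepsilon)$ for $\alpha_\varepsilon=m_\epsilon(g^{(n)})$. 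The uniqueness hypothesis then guarantees that $\alpha_\varepsilon$ is the unique zero of $J_g^\varepsilon$, so the tilted process concentrates on the finite-dimensional ball $O_{n,r}$ whose preimage is $O_{\epsilon,r}\subset O$; and the elementary inequality $J_g(\alpha_\varepsilon)\le J(m_\epsilon)$ (since $m_\epsilon$ is one competitor in the infimum defining $J_g(\alpha_\varepsilon)$) closes the estimate without ever needing $\mu_\beta$ to approximate $m_\epsilon$ in either the weak-star topology or in rate-function value. If you want to salvage your outline, replace steps (a)--(c) with: project to $\real^n$ first, take $\alpha_\varepsilon=m_\epsilon(g^{(n)})$, and invoke the finite-dimensional subgradient theorem to get $\beta_\varepsilon$ directly, rather than passing through a near-optimal $\omega$.
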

As consequence of Theorem $1$ $(1)$ we have the following result which
asserts that the proportion of geodesic arcs supporting a Lebesgue
measure close to an equilibrium state is asymptotically equal to $1$
: $\lim_{T\rightarrow \infty} \nu_{T}(V)=1$ and the convergence is
exponential. Set 
$V^{c}:=\mathcal{P}(SM)\backslash V$.
\begin{corollary}
For any open neighborhood $V$ of $\mathcal{P}_{e}(F)$ in
$\mathcal{P}(SM)$ we have 
\[
\lim_{T\rightarrow +\infty } \frac{\int \left(
\sum_{\gamma _{xy}\in G_{\delta,T}:
\delta_{\gamma _{xy}}\in V}
e^{\int_{\gamma _{xy}}F}\right) dxdy}{\int \left( \sum_{\gamma
_{xy} \in G_{\delta,T}}e^{\int_{\gamma
_{xy}}F}\right) dxdy}=1,
\]where the convergence is exponential with speed $e^{-TJ(V^{c})}$.
\end{corollary}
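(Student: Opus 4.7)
The strategy is to reduce the claim to the upper bound in Theorem~1~(1) applied to the closed set $K = V^c$. Once one shows that $J_F(V^c) > 0$, the exponential decay of $\nu_T(V^c)$, and hence the exponential convergence of $\nu_T(V) = 1 - \nu_T(V^c)$ to $1$, follows immediately. So the entire content of the corollary is the strict positivity of the rate on $V^c$.

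To verify $J_F(V^c) > 0$ I would argue as follows. Since $SM$ is compact, $\mathcal{P}(SM)$ is compact in the weak star topology, so the closed set $V^c = \mathcal{P}(SM) \setminus V$ is itself compact. By Lemma~1 and Remark~1, one has $\mathcal{P}_e(F) = \{J_F = 0\}$, and since $V$ is an open neighborhood of $\mathcal{P}_e(F)$, the set $V^c$ is disjoint from $\{J_F = 0\}$, so $J_F > 0$ pointwise on $V^c$. Because $J_F$ is lower semicontinuous (it is the supremum, in (3), of the continuous affine functionals $\mu \mapsto \int \omega\,d\mu - Q_F(\omega)$), it attains its infimum on the compact set $V^c$, and therefore
\[
J_F(V^c) = \min_{\mu \in V^c} J_F(\mu) > 0.
\]

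Applying Theorem~1~(1) with $K = V^c$ then gives
\[
\limsup_{T\to +\infty} \frac{1}{T} \log \nu_T(V^c) \leq -J_F(V^c) < 0,
\]
so for every $\varepsilon \in (0, J_F(V^c))$ one has $\nu_T(V^c) \leq e^{-T(J_F(V^c) - \varepsilon)}$ for all sufficiently large $T$. In particular $\nu_T(V) = 1 - \nu_T(V^c) \to 1$ at the announced exponential speed $e^{-T J_F(V^c)}$.

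There is essentially no obstacle beyond the two ingredients already in place: the identification $\mathcal{P}_e(F) = \{J_F = 0\}$ (with lower semicontinuity of $J_F$) and the upper large deviation bound of Theorem~1~(1). The only minor point to handle cleanly is that $J_F(V^c)$ is a \emph{minimum}, not merely an infimum; this is why I would emphasize the compactness of $\mathcal{P}(SM)$ and invoke lower semicontinuity explicitly, rather than arguing in some ad hoc way.
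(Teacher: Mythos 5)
Your proof is correct and follows essentially the same route as the paper: apply Theorem~1~(1) to $K = V^c$, use compactness of $V^c$ together with lower semicontinuity of $J_F$ to see that the infimum is attained, and invoke $\mathcal{P}_e(F) = \{J_F = 0\}$ from Remark~1 to conclude $J_F(V^c) > 0$. The paper states ``$J(K)=J(m)$ for some $m\in K$'' without elaboration; you have merely filled in the brief justification (compactness of $\mathcal{P}(SM)$ plus lower semicontinuity) that the paper leaves implicit.
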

We know that large deviation principles are preserved under continuous
mapping, this is known as the contraction principle (\cite{DZ}). In the  
present case, the contraction principle reduces the preceeding
theorem to a finite dimensional one. First, we need some notations.

Set for $g\in C_{\real^{n}}(SM)$ and $\alpha \in \real^{n}$, $n>0$,
\[
\mathcal{P}_{g,\alpha}(SM):=\{m\in \mathcal{P}(SM):\int_{SM}gdm=\alpha\}.
\]
We define the functional
\begin{equation*}
J_{g}(\alpha)=\left\{
          \begin{array}{ll}\inf (J(m):m\in \mathcal{P}_{g,\alpha}(SM)&
  \quad \mathrm{if} \quad \mathcal{P}_{g,\alpha}(SM)\ne
  \emptyset \\ 
 +\infty & \quad \mathrm{if} \quad
  \mathcal{P}_{g,\alpha}(SM)=\emptyset \\          
          \end{array}
        \right.
\end{equation*}
and for any $E_{n} \subset \real^{n}$,
\[
J_{g}(E_{n})=\inf \left ( J_{g}(\alpha): \alpha \in E_{n}\right ).
\]
To simplify the notations set $m(g)=\int g dm$ for $m\in \mathcal{P}(SM)$.

\begin{theorem}[Contraction principle] Let $M$ be a closed and
  connected manifold equipped with a $C^{\infty }$ Riemannian metric
  and $F\in C_{\real}(SM)$. Let $g\in C_{\real^{n}}(SM)$. Then for any
  $\delta >0$ we have 
\begin{enumerate}
\item For any closed subset $K_{n} \subset \real^{n}$,
\[
\limsup_{T\rightarrow +\infty }\frac{1}{T}\log \frac{\int \left(
\sum_{\gamma _{xy}\in G_{\delta,T}: 
\delta_{\gamma _{xy}}(g)\in K_{n}}
e^{\int_{\gamma _{xy}}F}\right) dxdy}{\int \left(\sum_{\gamma 
_{xy} \in G_{\delta,T}}e^{\int_{\gamma
_{xy}}F}\right) dxdy}\leq  -J_{g}(K_{n}).
\]
\item  If for all $\beta \in \real^{n}$, $F+\beta \cdot g$ has a unique 
equilibrium state, then for any open subset
  $O_{n}\subset \real^{n}$, 
\[
\liminf_{T\rightarrow +\infty }\frac{1}{T}\log \frac{\int \left(
\sum_{\gamma _{xy}\in G_{\delta,T}:
\delta_{\gamma _{xy}}(g)\in O_{n} }
e^{\int_{\gamma _{xy}}F}\right) dxdy}{\int \left(
  \sum_{\gamma_{xy} \in G_{\delta,T}}
e^{\int_{\gamma_{xy}}F}\right) dxdy}\geq -J_{g}(O_{n}).
\]
\end{enumerate}
\end{theorem}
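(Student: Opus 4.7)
The plan is to derive Theorem~$2$ from Theorem~$1$ via the contraction principle for large deviations, applied to the evaluation map
\[
\Pi_{g} : \mathcal{P}(SM)\longrightarrow \real^{n},\qquad m\mapsto (m(g_{1}),\dots,m(g_{n})),
\]
which is continuous for the weak-$*$ topology since each $g_{i}$ is continuous. If $\tilde{\nu}_{T}:=\nu_{T}\circ\Pi_{g}^{-1}$ denotes the push-forward process on $\real^{n}$, the ratios appearing in Theorem~$2$ are precisely $\tilde{\nu}_{T}(K_{n})$ and $\tilde{\nu}_{T}(O_{n})$, so it suffices to establish a large deviation principle for $\tilde{\nu}_{T}$ with rate function $J_{g}$.

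The upper bound is essentially automatic. Given a closed $K_{n}\subset\real^{n}$, the preimage $\Pi_{g}^{-1}(K_{n})$ is weak-$*$ closed in $\mathcal{P}(SM)$, and Theorem~$1(1)$ yields
\[
\limsup_{T\to\infty}\frac{1}{T}\log\tilde{\nu}_{T}(K_{n})\le -J\!\left(\Pi_{g}^{-1}(K_{n})\right).
\]
Grouping the infimum on the right according to the value $\alpha=m(g)\in K_{n}$ gives $J(\Pi_{g}^{-1}(K_{n}))=\inf_{\alpha\in K_{n}}J_{g}(\alpha)=J_{g}(K_{n})$ directly from the definitions.

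For the lower bound I cannot invoke Theorem~$1(2)$ verbatim, since its hypothesis requires uniqueness of equilibrium states for all finite linear combinations over the countable dense family $\mathcal{C}$, whereas Theorem~$2(2)$ only assumes it for the fixed family $g$. Instead I would apply the G\"artner--Ellis theorem directly to the $\real^{n}$-valued process $\tilde{\nu}_{T}$. Using the Ma\~n\'e--Paternain formulas (Theorems~$8$ and~$9$ of the paper) together with the identity $\int_{\gamma_{xy}}\beta\cdot g = l(\gamma_{xy})\,\delta_{\gamma_{xy}}(\beta\cdot g)= T\,\beta\cdot\delta_{\gamma_{xy}}(g)+O(\delta)$ on $G_{\delta,T}$, one computes
\[
\Lambda_{g}(\beta):=\lim_{T\to\infty}\frac{1}{T}\log\int_{\real^{n}}e^{T\beta\cdot \alpha}\,d\tilde{\nu}_{T}(\alpha)=Q_{F}(\beta\cdot g),\qquad\beta\in\real^{n}.
\]
This function is finite and convex on $\real^{n}$, and the uniqueness hypothesis of $(2)$ translates (via the variational principle) into the existence of a unique subgradient of $\beta\mapsto P(F+\beta\cdot g)$ at every $\beta$, hence G\^{a}teaux differentiability of $\Lambda_{g}$ on all of $\real^{n}$. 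This is the essential smoothness required by G\"artner--Ellis, which delivers
\[
\liminf_{T\to\infty}\frac{1}{T}\log\tilde{\nu}_{T}(O_{n})\ge -\Lambda_{g}^{*}(O_{n}),\qquad \Lambda_{g}^{*}(\alpha)=\sup_{\beta\in\real^{n}}(\beta\cdot\alpha-Q_{F}(\beta\cdot g)).
\]

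It remains to identify $\Lambda_{g}^{*}$ with $J_{g}$. The inequality $\Lambda_{g}^{*}\le J_{g}$ is immediate by restricting the supremum in $(3)$ to $\omega=\beta\cdot g$. For the reverse, if $\beta^{*}$ realises $\Lambda_{g}^{*}(\alpha)$, then differentiability of $P$ at $F+\beta^{*}\cdot g$ produces the unique equilibrium state $m^{*}$, which satisfies $m^{*}(g)=\nabla_{\beta}P(F+\beta^{*}\cdot g)=\alpha$ and $J(m^{*})=\beta^{*}\cdot\alpha-Q_{F}(\beta^{*}\cdot g)=\Lambda_{g}^{*}(\alpha)$; hence $J_{g}(\alpha)\le\Lambda_{g}^{*}(\alpha)$. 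The main obstacle I anticipate is precisely this identification, together with verifying that $\Lambda_{g}$ is G\^{a}teaux differentiable on all of $\real^{n}$ rather than only on a dense set of directions; the convex-analytic arguments underlying Lemma~$1$ and those in \cite{Ki} should carry over to the finite-dimensional quotient essentially verbatim.
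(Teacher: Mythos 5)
Your upper bound argument coincides with the paper's (a direct grouping of the infimum in $J$ by the value of $m(g)$), but your lower bound route is genuinely different from what the paper does. The paper proves Theorem~$2(2)$ by an explicit exponential tilting: it fixes $\alpha_{\varepsilon}\in O_{n}$ almost attaining $J_{g}(O_{n})$, invokes Rockafellar (Theorems 23.4--23.5) to produce a conjugate $\beta_{\varepsilon}$ with $Q(\beta_{\varepsilon}\cdot g)=\beta_{\varepsilon}\cdot\alpha_{\varepsilon}-J_{g}(\alpha_{\varepsilon})$, rewrites the sum over $\Gamma_{T}(O_{n,r})$ in terms of the tilted potential $F+\beta_{\varepsilon}\cdot g$, and then establishes that the tilted normalised mass $Z_{T}^{\varepsilon}(O_{n,r})\to 1$ by applying Theorem~$1(1)$ to the complement $K_{n,r}$ and using uniqueness of the equilibrium state for $F+\beta_{\varepsilon}\cdot g$ to force $J^{\varepsilon}((g*)^{-1}K_{n,r})>0$. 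You instead push the whole argument through G\"artner--Ellis on the finite-dimensional image process $\tilde\nu_{T}$. These are the same mechanism at heart --- G\"artner--Ellis \emph{is} an exponential tilting argument --- so what you buy is concision and the ability to quote a standard theorem rather than reproving its content; what the paper buys is self-containedness (it never needs to verify essential smoothness explicitly, only the qualitative concentration of the tilted measure). Two remarks that close the gaps you flag. First, differentiability of $\beta\mapsto P(F+\beta\cdot g)$ under the uniqueness hypothesis follows because the one-sided directional derivatives of the pressure at $F+\beta\cdot g$ in the direction $g_{i}$ are $\max$ resp.\ $\min$ of $m(g_{i})$ over equilibrium states, which coincide when the equilibrium state is unique; a finite convex function on $\real^{n}$ with all partial derivatives existing is automatically differentiable (Rockafellar, Theorem 25.2), so no density-of-directions argument is needed. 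Second, you do not actually need the hard direction $J_{g}\le\Lambda_{g}^{*}$: since the easy restriction argument gives $\Lambda_{g}^{*}\le J_{g}$ pointwise, the G\"artner--Ellis lower bound $\liminf\ge-\inf_{O_{n}}\Lambda_{g}^{*}\ge-\inf_{O_{n}}J_{g}=-J_{g}(O_{n})$ is already the desired inequality; the full identification (which the paper asserts as equation~(11) via Fenchel--Moreau, using that $J_{g}$ is convex and lower semicontinuous) is unnecessary for the lower bound.
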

Part $(1)$ of Theorem $2$ is a consequence of Theorem $1$ $(1)$, by the
continuity of the function $g$.
Note that we do not assume in part ($2$) of Theorem $2$ that $g\in
\mathcal{C}^{n}$. This last condition is used in the proof of
part ($2$) of Theorem $1$.
In other words, the conclusion of Theorem $2$ ($2$) holds for any continuous
$g$ such that for all $\beta \in \real^{n}$, $F+\beta \cdot g$ admits
a unique equilibrium state.

For any $\delta >0$ we define the probability measures on $SM$,
\begin{equation}
m_{\delta,T}:=\frac{\int \left ( \sum_{\gamma _{xy}\in
    G_{\delta,T}}e^{\int_{\gamma_{xy}}F}\cdot
\delta_{\gamma _{xy}}\right )dxdy} 
{\int \left ( \sum_{\gamma _{xy}\in
G_{\delta,T}}e^{\int_{\gamma_{xy}}F}\right )dxdy}.
\end{equation}
As an application of Theorem $1$ ($1$) we prove the following theorem.
\begin{theorem}  Let $M$ be a closed and
  connected manifold equipped with a $C^{\infty }$ Riemannian metric
  and $F\in C_{\real}(SM)$. For any $\delta >0$, any weak limit $m_{\infty}$ of
  $(m_{\delta,T})_{T}$ is an 
  equilibrium states for the potential $F$ , i.e $m_{\infty}$ is
  invariant and satisfies 
\[
h(m_{\infty})+\int F dm_{\infty}=P(F).
\]
\end{theorem}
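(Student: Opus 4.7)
The plan is to observe that $m_{\delta,T}$ is the barycenter of the process $\nu_T$ from equation (5), and then invoke Corollary $1$ (which itself follows from Theorem $1\,(1)$) to force any weak limit into $\mathcal{P}_e(F)$. Unwinding definitions, for every $\omega \in C_{\real}(SM)$ one has
\[
m_{\delta,T}(\omega) \;=\; \int_{\mathcal{P}(SM)} \mu(\omega)\, d\nu_T(\mu),
\]
since evaluating $\delta_{\gamma_{xy}}$ on $\omega$ and weighting by $e^{\int_{\gamma_{xy}}F}$ reproduces the numerator of $m_{\delta,T}$, while the normalising denominators agree. This identity is the bridge between the process $(\nu_T)$ on $\mathcal{P}(SM)$ and the measures $(m_{\delta,T})$ on $SM$ that we wish to analyse.

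Fix $\omega \in C_{\real}(SM)$ and $\epsilon > 0$. By continuity of $\mu \mapsto \mu(\omega)$ on $\mathcal{P}(SM)$ and compactness of $\mathcal{P}_e(F)$, I would choose an open neighborhood $V$ of $\mathcal{P}_e(F)$ such that
\[
\inf_{m \in \mathcal{P}_e(F)} m(\omega) - \epsilon \;\le\; \mu(\omega) \;\le\; \sup_{m \in \mathcal{P}_e(F)} m(\omega) + \epsilon \quad \text{for all } \mu \in V.
\]
Splitting the barycentric integral as $\int_V + \int_{V^c}$, bounding the $V^c$-contribution by $\|\omega\|_\infty\, \nu_T(V^c)$, and using $\nu_T(V) \to 1$ together with $\nu_T(V^c) \to 0$ from Corollary $1$, I obtain, along any weakly convergent subsequence $m_{\delta,T_k} \to m_\infty$,
\[
\inf_{m \in \mathcal{P}_e(F)} m(\omega) - \epsilon \;\le\; m_\infty(\omega) \;\le\; \sup_{m \in \mathcal{P}_e(F)} m(\omega) + \epsilon.
\]
Letting $\epsilon \to 0$ yields the corresponding bracketing with $\epsilon = 0$ for every continuous $\omega$.

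To conclude, I would invoke Hahn-Banach: the introduction records that $\mathcal{P}_e(F)$ is nonempty, closed and convex in $\mathcal{P}(SM)$. If $m_\infty$ were outside $\mathcal{P}_e(F)$, there would exist a continuous $\omega$ with $m_\infty(\omega) > \sup_{m \in \mathcal{P}_e(F)} m(\omega)$, contradicting the bracketing. Hence $m_\infty \in \mathcal{P}_e(F)$, which by definition means $m_\infty$ is flow-invariant and satisfies $h(m_\infty) + \int F\, dm_\infty = P(F)$. The delicate point I expect is precisely this last step: convexity of the set of equilibrium states is essential, since it is what guarantees that a barycenter of measures spread across different extremal equilibrium states is still an equilibrium state; note also that the argument uses only the upper bound Theorem $1\,(1)$, so no uniqueness hypothesis on $\mathcal{P}_e(F)$ is required.
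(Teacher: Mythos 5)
Your proof is correct, and it takes a cleaner, more functional-analytic route than the paper's. The paper's proof also uses, implicitly, the fact that $m_{\delta,T}$ is the barycenter of $\nu_T$, but it does so through an explicit discretization: it covers $\mathcal{P}_e(F)$ by finitely many small balls inside a convex $V$, builds an approximating measure $\beta_T$ as a convex combination of one representative per ball, and then estimates $d(m_{\delta,T},\beta_T)\leq \epsilon\,\nu_T(U)+\tfrac32\nu_T(U^c)$ in the explicit metric $d(m,m')=\sum_k 2^{-k}|m(g_k)-m'(g_k)|$, invoking Corollary $1$ to kill $\nu_T(U^c)$ and concluding $\limsup_T d(m_{\delta,T},V)\leq\epsilon$ for arbitrary $V$. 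You instead state the barycenter identity $m_{\delta,T}(\omega)=\int\mu(\omega)\,d\nu_T(\mu)$ explicitly, evaluate against an arbitrary test function $\omega$, split the integral over $V$ and $V^c$ (with $V=V_{\omega,\epsilon}$ an open sublevel/superlevel neighborhood of $\mathcal{P}_e(F)$, which exists by compactness of $\mathcal{P}_e(F)$ and continuity of $\mu\mapsto\mu(\omega)$), invoke Corollary $1$ to get the bracketing of $m_\infty(\omega)$, and finish by Hahn–Banach separation of the point $m_\infty$ from the compact convex set $\mathcal{P}_e(F)$, using that the weak-star dual of $C(SM)^*$ is $C(SM)$. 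Both proofs depend on the same two essential ingredients — Corollary $1$ and the compact convexity of $\mathcal{P}_e(F)$ — but your approach avoids the finite covering, the fixed choice of representatives, and the metric bookkeeping; the paper's approach is more hands-on and makes the dependence on the concrete metric $d$ visible, which it reuses again in the proof of Theorem $6\,(2)$.
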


These theorems apply to the geodesic flow of a manifold of negative
curvature and H\"older continuous potentials $F$. The following
result is particulary important when the unique equilibrium state
$\mu_{F}$ corresponding to $F$ is the Bowen-Margulis measure or the
harmonic measure (section 1). In particular, if $F$ is the
``Liouvile potential'' (section 1) then this corollary says
that the geodesic arcs are uniformly distributed.

\begin{corollary}
Let $M$ be a closed and connected manifold of negative curvature
equipped with a $C^{\infty }$ Riemannian metric. Suppose that the
potential $F$ is H\"older continuous and let $\mu_{F}$ be the unique
corresponding equilibrium state. Then, for any $\delta >0$, the measures
  $(m_{\delta,T})_{T}$ converge weakly  to
$\mu_{F}$ as $T\rightarrow +\infty$.   
\end{corollary}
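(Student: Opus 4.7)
The plan is to deduce Corollary 2 directly from Theorem 3 combined with the classical uniqueness of equilibrium states for H\"older potentials on Anosov flows. Since $M$ has negative curvature, Anosov's theorem gives that the geodesic flow on $SM$ is a transitive Anosov flow. The Bowen--Ruelle theory then guarantees that every H\"older continuous potential $F\in C_{\real}(SM)$ admits a unique equilibrium state. Under the hypotheses of the corollary we therefore have $\mathcal{P}_{e}(F)=\{\mu_{F}\}$, reducing the problem to showing that $(m_{\delta,T})_{T}$ can have only one weak limit.

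Next I would exploit compactness. Because $SM$ is compact, the space $\mathcal{P}(SM)$ equipped with the weak-$*$ topology is compact and metrizable, so the family $(m_{\delta,T})_{T>0}$ is relatively compact. To prove that it converges to $\mu_{F}$ it suffices to show that every weak limit point coincides with $\mu_{F}$. Pick any sequence $T_{n}\to +\infty$ along which $m_{\delta,T_{n}}$ converges weakly to some $m_{\infty}\in\mathcal{P}(SM)$. By Theorem 3, $m_{\infty}$ is an equilibrium state for $F$, i.e.\ $m_{\infty}\in\mathcal{P}_{e}(F)$, and uniqueness forces $m_{\infty}=\mu_{F}$. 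A standard argument in a compact metrizable space then upgrades ``every subsequential limit equals $\mu_{F}$'' to ``the full net $(m_{\delta,T})_{T}$ converges weakly to $\mu_{F}$ as $T\to+\infty$''.

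There is essentially no obstacle beyond citing the Bowen--Ruelle uniqueness result for H\"older potentials on transitive Anosov flows; the rest is soft analysis. As a sanity check and alternative route, one can deduce the same statement from Corollary 1: the probability measure $m_{\delta,T}$ is the barycenter of $\nu_{T}$ on $\mathcal{P}(SM)$, and Corollary 1 says that $\nu_{T}$ concentrates (exponentially fast) on every weak-$*$ neighborhood of $\mathcal{P}_{e}(F)=\{\mu_{F}\}$. Since each continuous function $\omega\in C_{\real}(SM)$ induces a continuous bounded map $\mu\mapsto\int \omega\,d\mu$ on $\mathcal{P}(SM)$, the barycenter of $\nu_{T}$ must converge to $\mu_{F}$ in the weak-$*$ sense, giving another proof that $m_{\delta,T}\to\mu_{F}$.
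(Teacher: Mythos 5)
Your primary argument — invoke Theorem 3 to conclude that every weak-$*$ subsequential limit of $(m_{\delta,T})_T$ lies in $\mathcal{P}_e(F)$, use the Bowen--Ruelle uniqueness of equilibrium states for H\"older potentials of transitive Anosov flows to get $\mathcal{P}_e(F)=\{\mu_F\}$, then upgrade subsequential convergence to full convergence by compactness and metrizability of $\mathcal{P}(SM)$ — is exactly the argument the paper intends, and indeed the final sentence of the paper's proof of Theorem 3 explicitly notes that when $\mathcal{P}_e(F)$ is a singleton the measures $m_{\delta,T}$ converge to it. Your alternative route via Corollary 1, viewing $m_{\delta,T}$ as the barycenter of $\nu_T$ and using the exponential concentration of $\nu_T$ near $\{\mu_F\}$, is also correct and is in fact close in spirit to the distance estimate $d(m_{\delta,T},V)\leq\epsilon\nu_T(U)+\tfrac{3}{2}\nu_T(U^c)$ used inside the proof of Theorem 3, so it offers no genuinely different machinery, only a cleaner packaging once uniqueness is in hand.
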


Consider now the measure $\mu_{max}$ of maximal entropy \cite{Kn} of the
geodesic flow of a rank 1 manifold. Set
\[
\mu_{\delta,T}:=\frac{\int \left ( \sum_{\gamma _{xy}\in
    G_{\delta,T}}\delta_{\gamma _{xy}}\right )dxdy}
{\int \left ( \sum_{\gamma _{xy}\in
G_{\delta,T}}\right )dxdy}.
\]
\begin{corollary}
Let $M$ be a closed and connected rank 1 manifold 
equipped with a $C^{\infty }$ Riemannian metric.
Then, for any $\delta >0$, the measures
$(\mu_{\delta,T})_{T}$ converge weakly to
$\mu_{max}$ as $T\rightarrow +\infty$.  
\end{corollary}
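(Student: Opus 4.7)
\medskip

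\noindent\textbf{Proof plan.} The plan is to recognise $\mu_{\delta,T}$ as the special case of $m_{\delta,T}$ attached to the zero potential, and then to reduce the statement to the uniqueness of the measure of maximal entropy proved in \cite{Kn}. First I would observe that when $F\equiv 0$ the weights $e^{\int_{\gamma_{xy}}F}$ collapse to $1$, so formula $(6)$ for $m_{\delta,T}$ specialises exactly to the definition of $\mu_{\delta,T}$. In particular, every statement established for $(m_{\delta,T})_T$ applies verbatim to $(\mu_{\delta,T})_T$.

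Next I would apply Theorem $3$ to $F\equiv 0$: any weak-$\ast$ accumulation point $m_{\infty}$ of the family $(\mu_{\delta,T})_T$ is a $\phi$-invariant probability measure satisfying
\[
h(m_{\infty})=h(m_{\infty})+\int 0\, dm_{\infty}=P(0)=h_{top},
\]
i.e.\ $m_{\infty}$ is a measure of maximal entropy. Knieper's theorem \cite{Kn} then applies: on a closed rank $1$ manifold, the measure of maximal entropy is unique, namely $\mu_{max}$. Consequently the only possible weak-$\ast$ accumulation point of $(\mu_{\delta,T})_T$ is $\mu_{max}$ itself.

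To conclude, I would invoke compactness: since $SM$ is compact, $\mathcal{P}(SM)$ is weak-$\ast$ compact and metrisable, so every subsequence of $(\mu_{\delta,T})_T$ has a further subsequence converging weakly to some $m_{\infty}$, which by the previous step equals $\mu_{max}$. A standard subsequence argument then yields $\mu_{\delta,T}\rightarrow \mu_{max}$ as $T\rightarrow +\infty$ for each fixed $\delta>0$.

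The main obstacle is not in this corollary itself: all the genuine work is absorbed into Theorem $3$ (the fact that averaged arc-measures accumulate on equilibrium states) together with Knieper's uniqueness result. The only thing worth checking is that Ma\~n\'e's and Paternain's formulas, on which Theorem $3$ rests, are indeed valid in the rank $1$ nonpositive curvature setting; this is precisely the framework highlighted by the author in the introduction as a setting where the results of the paper apply.
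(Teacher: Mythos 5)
Your argument is correct and follows exactly the route the paper intends: specialise $m_{\delta,T}$ to $F\equiv 0$ to recover $\mu_{\delta,T}$, invoke Theorem~$3$ to see that every weak-$\ast$ accumulation point is a measure of maximal entropy, and then use Knieper's uniqueness theorem together with the weak-$\ast$ compactness of $\mathcal{P}(SM)$ to upgrade this to convergence. One small remark: your closing concern about whether Ma\~n\'e's and Paternain's formulas hold in the rank~$1$ setting is unnecessary, since Theorems~$8$ and~$9$ of the paper are stated (and proved in the cited references) for arbitrary closed manifolds with a $C^{\infty}$ metric, with no curvature hypothesis whatsoever; the rank~$1$ assumption is used only to appeal to Knieper's uniqueness result.
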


\section{Constant and positive potentials}
We state in this section some results which are not a direct
consequence of the previous ones.
\subsection{Positive potentials}
Consider the probability measures defined on $SM$ by
\[
m_{T}:=\frac{\int \left ( \sum_{\gamma _{xy}\in
    G_{T}}e^{\int_{\gamma_{xy}}F}\delta_{\gamma _{xy}}\right )dxdy}
{\int \left ( \sum_{\gamma _{xy}\in
G_{T}}e^{\int_{\gamma_{xy}}F}\right )dxdy}.
\]
\begin{theorem}  Let $M$ be a closed and connected manifold equipped
with a $C^{\infty }$ Riemannian metric  and $F\in C_{\real}(SM)$.
Assume that $P(F)>0$.
Then, the weak limits of $(m_{T})_{T}$ are
equilibrium states corresponding to the potential $F$. If $M$ has 
negative curvature then $(m_{T})_{T}$ converges to the unique
equilibrium state $\mu_{F}$ corresponding to $F$.
\end{theorem}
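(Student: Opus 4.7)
The plan is to reduce Theorem 4 to Theorem 3 by partitioning $G_T$ into length-slices of width $\delta$ and observing that, because $P(F)>0$, the exponential growth of the partition function forces essentially all of the Gibbs weight onto arcs of nearly maximal length, where Theorem 3 already places the normalized Lebesgue measures near $\mathcal{P}_{e}(F)$. Concretely, set $N:=\lfloor T/\delta\rfloor$ and
\[
Z_{\delta,k\delta}:=\int_{M\times M}\sum_{\gamma_{xy}\in G_{\delta,k\delta}}e^{\int_{\gamma_{xy}}F}\,dxdy,\qquad Z_T:=\int_{M\times M}\sum_{\gamma_{xy}\in G_{T}}e^{\int_{\gamma_{xy}}F}\,dxdy.
\]
The disjoint decomposition $G_T=\bigsqcup_{k=1}^{N}G_{\delta,k\delta}\sqcup G_{T-N\delta,T}$ unfolds to
\[
m_T=\sum_{k=1}^{N}\lambda_{k}^{T}\,m_{\delta,k\delta}+R_T,\qquad \lambda_{k}^{T}:=\frac{Z_{\delta,k\delta}}{Z_T},
\]
with a remainder $R_T$ of total mass at most $Z_{\delta,T}/Z_T$.

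The second step is to concentrate the weights $\lambda_{k}^{T}$ on long arcs. Ma\~n\'e's and Paternain's formulas (Theorems 8 and 9) give $\frac{1}{T}\log Z_{\delta,T}\to P(F)$, and a standard comparison yields $\frac{1}{T}\log Z_T\to P(F)$ as well. Fix $\epsilon\in(0,1)$ and choose $\eta>0$ so that $\epsilon P(F)>(1-\epsilon)\eta$. For $k$ beyond some $K_0$ one has $Z_{\delta,k\delta}\le e^{k\delta(P(F)+\eta)}$, while $Z_T\ge e^{T(P(F)-\eta)}$ for $T$ large, whence
\[
\sum_{k:\,k\delta\le T(1-\epsilon)}\lambda_{k}^{T}\longrightarrow 0\qquad(T\to\infty),
\]
the finitely many terms with $k<K_0$ contributing $O(1)/Z_T\to 0$ separately. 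Thus $m_T$ is asymptotically a convex combination of the measures $m_{\delta,k\delta}$ with $k\delta\in(T(1-\epsilon),T]$. Suppose now that a weak-$*$ limit $m_\infty$ were not an equilibrium state. Since $\mathcal{P}_{e}(F)$ is compact and convex, the Hahn--Banach theorem provides $\omega\in C_{\real}(SM)$ and constants $c,\eta'>0$ with $\int\omega\,d\mu\le c$ for every $\mu\in\mathcal{P}_{e}(F)$ and $\int\omega\,dm_\infty>c+\eta'$. Theorem 3 and the compactness of $\mathcal{P}(SM)$ then force $\int\omega\,dm_{\delta,k\delta}\le c+\eta'/2$ for all $k$ sufficiently large; combined with the weight concentration this yields $\limsup_T\int\omega\,dm_T\le c+\eta'/2$, contradicting $\int\omega\,dm_\infty>c+\eta'$.

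The negative curvature conclusion then follows easily: H\"older continuity of $F$ makes $\mathcal{P}_{e}(F)=\{\mu_F\}$ a singleton (as recalled in the introduction), so every weak-$*$ accumulation point of the relatively compact sequence $(m_T)$ equals $\mu_F$ and the whole sequence converges to $\mu_F$. I expect the weight-concentration step to be the main technical obstacle: the quantitative estimates on $Z_{\delta,k\delta}/Z_T$ must be coupled with a separate treatment of the finitely many small-$k$ slices where the Ma\~n\'e--Paternain asymptotic is not yet in force, and it is precisely here that $P(F)>0$ is essential --- without it, short arcs are not exponentially suppressed and the averaging defining $m_T$ can produce limits outside $\mathcal{P}_{e}(F)$.
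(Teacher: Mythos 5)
Your approach is genuinely different from the paper's. The paper does not slice $G_T$: it proves a separate large-deviation upper bound for the cumulative sets $G_T$ (Theorem~5(1), which requires the side condition $J(K)<P(F)$), and reduces Theorem~4 to Theorem~3 by constructing a test measure $m=\alpha\mu+(1-\alpha)\nu$ with $\nu\in\mathcal{P}_e(F)$, $\mu$ non-invariant, and $\alpha$ small, so that $m\in V^c$ while $J(m)\le\alpha J(\mu)<P(F)$; this makes Theorem~5(1) applicable to $K=V^c$. Your slicing argument is more elementary and, if completed, makes the role of $P(F)>0$ transparent (exponential suppression of short arcs) without invoking the extra LDP result; the paper's route, by contrast, packages the suppression into the large-deviation machinery and so extends more naturally to the other $G_T$ statements of Section~3.

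There is, however, a real gap in the step handling the remainder. In the bottom-up decomposition $G_T=\bigsqcup_{k=1}^N G_{\delta,k\delta}\sqcup G_{T-N\delta,T}$, the leftover piece $G_{T-N\delta,T}$ consists of the \emph{longest} arcs, those with $N\delta<l(\gamma_{xy})\le T$, and its Gibbs mass $Z_{T-N\delta,T}/Z_T$ is bounded by $Z_{\delta,T}/Z_T$, which does \emph{not} tend to $0$ (both numerator and denominator grow like $e^{TP(F)}$). Your final estimate $\limsup_T\int\omega\,dm_T\le c+\eta'/2$ only controls the contributions of the full shells $m_{\delta,k\delta}$ and silently drops $\int\omega\,dR_T$, whose size is of order $\|\omega\|\cdot\|R_T\|$ and need not be small. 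Since the width $T-N\delta$ of this last shell varies with $T$ (and can be arbitrarily small), you cannot simply invoke Theorem~3 for the normalized remainder either. The fix is to slice from the top instead: write $G_T=\bigsqcup_{j=0}^{N-1}G_{\delta,T-j\delta}\sqcup G_{T-N\delta}$, so the remainder consists of \emph{short} arcs of length at most $T-N\delta<\delta$ and has mass at most $Z_\delta/Z_T\to 0$; the shells $G_{\delta,T-j\delta}$ with $j\delta<\epsilon T$ are then exactly the ones Theorem~3 handles, and the tail $j\delta\ge\epsilon T$ is killed by the same exponential weight estimate. (Incidentally, the condition on $\eta$ you write down, $\epsilon P(F)>(1-\epsilon)\eta$, is not quite strong enough: the exponent you need to make negative is $\eta(2-\epsilon)-\epsilon P(F)$, so you should require $\eta<\epsilon P(F)/(2-\epsilon)$; this is a slip, not a conceptual error.)
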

Theorem $4$ is a consequence of part $1$ of the following theorem.
\begin{theorem}
Let $M$ be a closed and connected manifold equipped with a $C^{\infty }$
Riemannian metric  and $F\in C_{\real}(SM)$. Then 
\begin{enumerate}
\item If $P(F)>0$ we have for any closed subset $K$ of
  $\mathcal{P}(SM)$ such that $J(K)< P(F)$,
\[
\limsup_{T\rightarrow +\infty }\frac{1}{T}\log \frac{\int \left(
\sum_{\gamma _{xy}\in G_{T}:
\delta_{\gamma _{xy}}\in K}
e^{\int_{\gamma _{xy}}F}\right) dxdy}{\int \left( \sum_{\gamma
_{xy} \in G_{T}}e^{\int_{\gamma
_{xy}}F}\right) dxdy}\leq  -J(K).
\]
\item If for all $\beta \in \real^{n}$ and $g=(g_{1},\ldots,
  g_{n})\in \mathcal{C}^{n}$, $F+\beta \cdot g$ has a unique 
equilibrium state, then for any open subset $O$ of $\mathcal{P}(SM)$,  
\[
\liminf_{T\rightarrow +\infty }\frac{1}{T}\log \frac{\int \left(
\sum_{\gamma _{xy}\in G_{T}:
 \delta_{\gamma _{xy}}\in O} 
e^{\int_{\gamma _{xy}}F}\right) dxdy}{\int \left( \sum_{\gamma
_{xy} \in G_{T}}e^{\int_{\gamma
_{xy}}F}\right) dxdy}
\geq -J(O).
\]
\end{enumerate}
\end{theorem}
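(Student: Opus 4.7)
The plan is to reduce Theorem 5 to Theorem 1 by slicing $G_T$ into the disjoint union of the ``length window'' families $G_{\delta,k\delta}$ for $1\le k\le N(T):=\lfloor T/\delta\rfloor$ (the leftover piece with lengths in $(N(T)\delta,T]$ has the same exponential order as the last block and is harmless). I abbreviate
\begin{equation*}
A_T^E := \int_{M\times M}\sum_{\substack{\gamma_{xy}\in G_T\\ \delta_{\gamma_{xy}}\in E}}e^{\int_{\gamma_{xy}}F}\,dxdy,\qquad B_k^E := \int_{M\times M}\sum_{\substack{\gamma_{xy}\in G_{\delta,k\delta}\\ \delta_{\gamma_{xy}}\in E}}e^{\int_{\gamma_{xy}}F}\,dxdy,
\end{equation*}
writing $A_T:=A_T^{\mathcal{P}(SM)}$, $B_k:=B_k^{\mathcal{P}(SM)}$, so that $A_T^E=\sum_{k=1}^{N(T)}B_k^E$ up to the harmless tail. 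The two inputs I combine are Theorem 1 applied blockwise to each $B_k^E$, and the Ma\~n\'e--Paternain identity $\lim_k(k\delta)^{-1}\log B_k=P(F)$ (Theorems 8--9 of the paper) used to control the denominator.

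For part $(1)$, fix $\epsilon>0$. Theorem 1(1) combined with Ma\~n\'e--Paternain will give, for all $k\ge k_0(\epsilon)$,
\begin{equation*}
B_k^K \;\le\; B_k\,e^{-k\delta(J(K)-\epsilon)} \;\le\; e^{k\delta(P(F)-J(K)+2\epsilon)}.
\end{equation*}
Because of the hypothesis $J(K)<P(F)$ the common ratio $e^{\delta(P(F)-J(K)+2\epsilon)}$ exceeds $1$, so the geometric sum $\sum_{k_0\le k\le N(T)}$ is dominated up to a multiplicative constant by its last term; hence $A_T^K\le C_\epsilon\,e^{T(P(F)-J(K)+2\epsilon)}$ for $T$ large. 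The lower bound $A_T\ge B_{N(T)}\ge e^{T(P(F)-\epsilon)}$, again from Ma\~n\'e--Paternain, then yields $A_T^K/A_T\le C_\epsilon\,e^{-T(J(K)-3\epsilon)}$, and $(1)$ follows after $\limsup$ and $\epsilon\to 0$.

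For part $(2)$ the crude bound $A_T^O\ge B_{N(T)}^O$ already suffices. Theorem 1(2) together with Ma\~n\'e--Paternain gives, for $T$ large,
\begin{equation*}
B_{N(T)}^O \;\ge\; B_{N(T)}\,e^{-N(T)\delta(J(O)+\epsilon)} \;\ge\; e^{T(P(F)-J(O)-2\epsilon)},
\end{equation*}
while the upper estimate $B_k\le e^{k\delta(P(F)+\epsilon)}$ yields $A_T\le C_\epsilon\,T\,e^{T(P(F)+\epsilon)}$. The polynomial factor $T$ is absorbed into $e^{\epsilon T}$, producing $\liminf_T\frac{1}{T}\log(A_T^O/A_T)\ge -J(O)-3\epsilon$, and $\epsilon\to 0$ finishes $(2)$.

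The one subtle point (and the source of the extra hypothesis $J(K)<P(F)$ in $(1)$) is that the sum of blockwise bounds is a geometric series with common ratio $e^{\delta(P(F)-J(K))}$, and such a series is dominated by its last term only when that ratio exceeds $1$. If $J(K)\ge P(F)$ the series is instead dominated by its first terms and one can conclude no more than $\limsup_T\frac{1}{T}\log(A_T^K/A_T)\le -P(F)$, which is strictly weaker than the desired $-J(K)$. No analogous restriction is needed for $(2)$, which uses a single block and no summation.
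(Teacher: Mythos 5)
Your proposal is correct and uses the same underlying mechanism as the paper: decompose $G_T$ into $\delta$-length windows, control each window by the $G_{\delta,T}$ estimates, and sum the geometric series (for the upper bound) or keep only the top block (for the lower bound); the convergence of the geometric series is exactly where the hypothesis $J(K)<P(F)$ enters, as you correctly identify. The only real difference is one of packaging: where you invoke Theorem~1 and Theorem~9 as black boxes on each block $B_k^E$, the paper re-opens the covering-by-$K_i$ argument inside the sum over $j$ (writing $\Gamma_{ij}$ for arcs of length in $(T-(j+1)\delta,\,T-j\delta]$ with $\delta_{\gamma_{xy}}\in K_i$) and, for part~$(2)$, re-runs the contraction-principle computation of Theorem~2$(2)$ with $G_T$ in the denominator before appealing to the already-proven limit $(14)$. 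Your modular version is cleaner and shows more transparently that the $\delta$-window estimates genuinely imply the $G_T$ estimates; the paper's inline version has the marginal advantage of making the uniformity of the blockwise constants explicit (including the WLOG reduction to $\omega_i\ge0$, which guarantees $Q(\omega_i)+J(K)\ge0$), but this is exactly what your ``for $k\ge k_0(\epsilon)$'' is doing, with the finitely many small-$k$ blocks absorbed into a constant. One small caveat worth keeping in mind: your bound $A_T\ge B_{N(T)}\ge e^{T(P(F)-\epsilon)}$ and the absorption of the $T$ factor in part~$(2)$ both implicitly use $P(F)>0$ (for part~$(1)$ this is automatic since $J(K)\ge0$ and $J(K)<P(F)$; for part~$(2)$ the paper also implicitly uses $P(F)\ge0$ via Theorem~9$(2)$, and handles the general case by the remark about adding a constant to $F$).
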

Observe that given $F$ and any constant $c$ we have
$\mathcal{P}_{e}(F)=\mathcal{P}_{e}(F+c)$. On the other hand, we can
find a constant $c>0$ such that $F+c>0$. Thus up to a constant
we can always assume that $P(F)>0$. 
\subsection{Constant potentials}
We consider here constant potentials which is equivalent to set
$F\equiv 0$.
\begin{theorem} Suppose that $M$ is a closed and connected manifold equipped
    with a $C^{\infty}$ Riemannian metric. If $M$ has no conjugate points,
    then for any $\delta >0$,
\begin{enumerate}
\item For any closed subset $K$ of $\mathcal{P}(SM)$ and a.e $(x,y)\in
  M\times M$  
\[
\limsup_{T\rightarrow +\infty }\frac{1}{T}\log 
\frac{\#\left\{\gamma _{xy}\in G_{\delta,T}:
 \delta_{\gamma _{xy}}\in K\right\}}
{\# G_{\delta,T}}\leq  -J(K).
\]
\item For a.e $(x,y)\in M\times M$, the weak limits of 
\[
\mu_{\delta,T}(x,y):=\frac{\sum_{\gamma _{xy}\in
    G_{\delta,T}}\delta_{\gamma _{xy}}}
{\# G_{\delta,T}}, 
\]
are measures of maximal entropy.   
\end{enumerate}
\end{theorem}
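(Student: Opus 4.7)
The plan is to transfer the integrated upper bound of Theorem 1(1) applied with $F\equiv 0$ to a pointwise almost-everywhere statement by a Chebyshev--Borel--Cantelli argument, combined with a pointwise lower bound on $\#G_{\delta,T}(x,y)$ that is available under the no-conjugate-points hypothesis. I write
\[
A_T(x,y):=\#\{\gamma_{xy}\in G_{\delta,T}:\delta_{\gamma_{xy}}\in K\},\qquad B_T(x,y):=\#G_{\delta,T}(x,y).
\]
Theorem 1(1) together with the Ma\~n\'e formula (Theorem 8) yields, for every $\varepsilon>0$ and $T$ large,
\[
\int_{M\times M}A_T\,dxdy\leq e^{(h_{top}-J(K)+\varepsilon)T},\qquad \frac{1}{T}\log\int_{M\times M}B_T\,dxdy\longrightarrow h_{top}.
\]
Markov's inequality then gives $\mathrm{Vol}\{(x,y):A_T(x,y)>e^{(h_{top}-J(K)+2\varepsilon)T}\}\leq e^{-\varepsilon T}$, and since this is summable over $T\in\nat$, Borel--Cantelli produces $A_T(x,y)\leq e^{(h_{top}-J(K)+2\varepsilon)T}$ almost everywhere for all large integer $T$. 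Real $T\in[n,n+1]$ is handled by dominating $A_T$ by the analogue with $\delta$ replaced by $\delta+1$ and applying the integer version to this enlarged parameter.

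This bound on $A_T$ only translates into the claimed ratio bound once I know that $B_T(x,y)\geq e^{(h_{top}-\varepsilon)T}$ for a.e. $(x,y)$ and all large $T$. This is exactly where the no-conjugate-points hypothesis enters: under that assumption Ma\~n\'e's volume-growth argument produces $\liminf_{T\to\infty}\frac{1}{T}\log\#G_{\delta,T}(x,y)\geq h_{top}$ almost everywhere. Inserting this into the previous bound and letting $\varepsilon\to 0$ concludes part (1).

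For part (2), I interpret $\mu_{\delta,T}(x,y)$ as the barycenter on $SM$ of the empirical measure
\[
\xi_{\delta,T}(x,y):=\frac{1}{\#G_{\delta,T}(x,y)}\sum_{\gamma_{xy}\in G_{\delta,T}}\delta_{\delta_{\gamma_{xy}}}\in\mathcal{P}(\mathcal{P}(SM)).
\]
By Lemma 1 and Remark 1, $\{J_0=0\}=\mathcal{P}_e(0)$, which is the set $\mathcal{P}_{\mathrm{max}}$ of measures of maximal entropy. For any open neighborhood $V$ of $\mathcal{P}_{\mathrm{max}}$ the closed set $K:=V^{c}$ satisfies $J(K)>0$, so part (1) forces $\xi_{\delta,T}(x,y)(V^{c})\to 0$ almost everywhere, and hence every weak-$\ast$ limit of $\xi_{\delta,T}(x,y)$ is supported on $\mathcal{P}_{\mathrm{max}}$. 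The barycenter map $\mathcal{P}(\mathcal{P}(SM))\to\mathcal{P}(SM)$ is weak-$\ast$ continuous, and $\mathcal{P}_{\mathrm{max}}$ is convex (entropy is affine on invariant measures) and closed (by Newhouse's upper semicontinuity of the entropy map), so the barycenter of any such limit lies in $\mathcal{P}_{\mathrm{max}}$. Therefore every weak limit of $\mu_{\delta,T}(x,y)$ is a measure of maximal entropy.

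The main obstacle is the almost-sure lower bound $\liminf_{T}T^{-1}\log\#G_{\delta,T}(x,y)\geq h_{top}$ for manifolds without conjugate points; once this pointwise refinement of Ma\~n\'e's formula is in hand, the remainder is a routine transfer from integrated to pointwise statements via Chebyshev and Borel--Cantelli, together with the continuity of the barycenter map.
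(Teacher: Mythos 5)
Your Part (1) is essentially the paper's argument, recast. The paper does not invoke Theorem 1(1) as a black box and then apply Markov plus Borel--Cantelli; instead it re-runs the proof of Theorem 1(1) with $F=0$, covers $K$ by the finitely many $K_i$'s, and applies a ready-made almost-everywhere comparison lemma (Lemma 2, a continuous-time Borel--Cantelli statement, cited from Ma\~n\'e and Paternain) to each of the $l$ sums $\sum_{\gamma_{xy}}e^{\int\omega_i}$, then combines with the pointwise lower bound on $\#G_{\delta,T}(x,y)$ from Ma\~n\'e's formula (Theorem 8(3), valid for $C^3$ metrics without conjugate points). Your Markov--Borel--Cantelli step is just the proof of that Lemma 2, applied directly to $A_T$; the extra discretization gymnastics you perform are avoided in the paper because Lemma 2 is stated for continuous time. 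So the underlying idea is the same, and both yield an exceptional null set that depends on $K$, which the theorem permits. For Part (2), however, your route is genuinely different: you lift $\mu_{\delta,T}$ to the empirical measure $\xi_{\delta,T}\in\mathcal{P}(\mathcal{P}(SM))$, show that weak limits of $\xi_{\delta,T}$ concentrate on the closed convex set $\mathcal{P}_e(0)$, and push through the barycenter map. The paper instead re-runs the explicit convexity estimate of Theorem 3, bounding the distance $d(\mu_{\delta,T}(x,y),V)\le\epsilon\,\nu_T(U)+\tfrac{3}{2}\nu_T(U^c)$ using a finite disjoint decomposition of a union of small balls covering $\mathcal{P}_e(0)$. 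Your barycenter argument is cleaner conceptually and avoids the hands-on decomposition, but you should be explicit about the one detail the paper does handle: the full-measure set in Part (1) depends on $V^c$, so you must run the argument over a countable decreasing basis $(V_i)_i$ of neighborhoods of $\mathcal{P}_e(0)$ with $\cap_i V_i=\mathcal{P}_e(0)$ and intersect the resulting full-measure sets, exactly as the paper does; otherwise the ``a.e.\ $(x,y)$'' in your conclusion is not legitimate.
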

In part $(1)$ of Theorem $6$, the set of points $(x,y)\in
  M\times M$  for which we have the upper bound depends on the given closed
  set $K$, while in part $(2)$ it depends only on $\delta$.
\begin{theorem} Suppose that $M$ is a closed and connected manifold equipped
with a $C^{\infty}$ Riemannian metric. If $M$ has no conjugate points,
and $h_{top}>0$ then,
\begin{enumerate}
\item For any closed subset $K$ of $\mathcal{P}(SM)$ such that
$J(K)<h_{top}$ and a.e $(x,y)\in M\times M$
\[
\limsup_{T\rightarrow +\infty }\frac{1}{T}\log 
\frac{\#\left\{\gamma _{xy}\in G_{T}:
\delta_{\gamma _{xy}}\in K\right\} 
}{\# G_{T}}\leq  -J(K).
\]
\item For a.e $(x,y)\in M\times M$, the weak limits of 
\[
\mu_{T}(x,y):=\frac{\sum_{\gamma _{xy}\in
    G_{T}}\delta_{\gamma _{xy}}}{\# G_{T}},
\]
are measures of maximal entropy. 
\end{enumerate} 
\end{theorem}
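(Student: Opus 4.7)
The plan is to deduce Theorem 7 from Theorem 6 by writing $G_T(x,y)$ as a disjoint union of the shells $G_{\delta,k\delta}(x,y)$ for $1\le k\le \lceil T/\delta\rceil$, and exploiting the hypothesis $h_{top}>0$ so that geometric sums over shells are controlled by their last term.

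For part (1), fix a closed $K\subset\mathcal{P}(SM)$ with $J(K)<h_{top}$, and choose $\varepsilon>0$ with $J(K)+\varepsilon<h_{top}$. Theorem 6 (1), combined with the pointwise asymptotic $\frac{1}{T}\log\#G_{\delta,T}(x,y)\to h_{top}$ for a.e.\ $(x,y)$ (which comes from Ma\~n\'e's and Paternain's formulas together with the Borel--Cantelli argument already needed for Theorem 6), furnishes a full-measure set $\Omega_{K,\delta}\subset M\times M$ on which, for all $k$ large enough,
\[
\#\{\gamma\in G_{\delta,k\delta}(x,y):\delta_{\gamma}\in K\}\le e^{k\delta(h_{top}-J(K)+\varepsilon)},\qquad \#G_{\delta,k\delta}(x,y)\ge e^{k\delta(h_{top}-\varepsilon)}.
\]
Summing the first estimate over $k\le \lceil T/\delta\rceil$, the geometric series of ratio $e^{\delta(h_{top}-J(K)+\varepsilon)}>1$ is bounded by a constant times its last term, producing $\#\{\gamma\in G_{T}(x,y):\delta_{\gamma}\in K\}\le C_\varepsilon e^{T(h_{top}-J(K)+\varepsilon)}$. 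For the denominator one simply notes $\#G_T(x,y)\ge \#G_{\delta,\lfloor T/\delta\rfloor\delta}(x,y)\ge e^{(T-\delta)(h_{top}-\varepsilon)}$ for $T$ large. Forming the ratio, dividing by $T$, and letting $\varepsilon\downarrow 0$ finishes part (1).

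For part (2), since $\mathcal{P}(SM)$ is compact metrizable and $\mathcal{P}_e(0)=\{J=0\}$ is a non-empty closed convex subset (Lemma 1), I would fix a countable neighborhood base $(V_j)_{j\ge1}$ of $\mathcal{P}_e(0)$ consisting of convex open sets with $V_j$ shrinking to $\mathcal{P}_e(0)$. Lower semicontinuity of $J$ together with $\{J=0\}=\mathcal{P}_e(0)$ force $J(V_j^c)>0$ and $J(V_j^c)\to 0$; the hypothesis $h_{top}>0$ therefore guarantees $J(V_j^c)<h_{top}$ for all $j$ large. Applying part (1) to each $K_j:=V_j^c$ yields a full-measure $\Omega_j\subset M\times M$ on which $\mu_T(x,y)(V_j^c)\to 0$ exponentially, and $\Omega:=\bigcap_j\Omega_j$ is of full measure. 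For $(x,y)\in\Omega$, I decompose
\[
\mu_T(x,y)=\alpha_T^{(j)}\,\mu_T^{in,j}+(1-\alpha_T^{(j)})\,\mu_T^{out,j},\qquad \alpha_T^{(j)}=1-\mu_T(x,y)(V_j^c),
\]
where $\mu_T^{in,j}$ is the barycenter of $\{\delta_\gamma:\gamma\in G_T(x,y),\,\delta_\gamma\in V_j\}$. Convexity of $V_j$ places $\mu_T^{in,j}\in\overline{V_j}$ and $\alpha_T^{(j)}\to 1$, so every weak limit $m_\infty$ of $\mu_T(x,y)$ lies in $\overline{V_j}$ for every $j$, hence in $\bigcap_j\overline{V_j}=\mathcal{P}_e(0)$.

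The main obstacle is really internal to Theorem 6: the passage from the integrated large-deviation bounds of Theorem 1 to pointwise a.e.\ statements via Markov and Borel--Cantelli along the discrete sequence $T=k\delta$. Granting that, the only delicate point in the present proof is the positivity of $h_{top}-J(K)$: without it the geometric sum over shells would be dominated by early, not late, terms, which is precisely why $J(K)<h_{top}$ is imposed in part (1) and why the hypothesis $h_{top}>0$ is invoked (through $J(V_j^c)<h_{top}$ for small $V_j$) in part (2).
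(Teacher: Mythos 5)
Your argument is correct but takes a genuinely different route from the paper's, in both parts. For part (1), the paper reproves the bound for $G_T$ from scratch by adapting the argument of Theorem 5 (1): it covers $K$ by half-spaces $K_i$ associated with test functions $\omega_i$ normalized to be nonnegative, decomposes $[0,T]$ into $\delta$-shells inside the sum, passes from integrated to pointwise a.e.\ estimates via Lemma 2 and Theorem 9 (1), and then collapses the resulting geometric series using $J(K)<h_{top}$. You instead treat Theorem 6 (1) as a black box: you combine its shell bound with the two-sided pointwise estimate for $\#G_{\delta,k\delta}(x,y)$ coming from Theorem 8 (3) (which in fact holds for \emph{every} $(x,y)$ under the no-conjugate-points hypothesis, so your appeal to Borel--Cantelli is only needed for the Theorem 6 input, not for the counting asymptotics themselves), and then you sum over shells. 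Both proofs hinge on the same mechanism -- the hypothesis $J(K)<h_{top}$ makes the geometric sum over shells dominated by its last term -- but yours is more modular, and it isolates cleanly where the hypothesis enters. For part (2), the paper invokes the three-term distance estimate $d(\mu_T(x,y),V)\leq \epsilon\,\nu_T(U)+\tfrac{3}{2}\nu_T(U^{c})$ imported from the proofs of Theorem 3 and Theorem 6 (2); your convex decomposition $\mu_T=\alpha_T^{(j)}\mu_T^{in,j}+(1-\alpha_T^{(j)})\mu_T^{out,j}$ with $\mu_T^{in,j}\in V_j$ by convexity and $\alpha_T^{(j)}\to 1$ is more direct, reaches the same conclusion, and avoids the explicit metric computation.

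There is one step in part (2) whose justification is wrong as stated, although the claim itself is true and the gap is easily filled. You assert that lower semicontinuity of $J$ together with $\{J=0\}=\mathcal{P}_e(0)$ forces $J(V_j^{c})\to 0$. Lower semicontinuity alone does not give this: a nonnegative lsc function vanishing exactly on a closed set can be bounded below by a positive constant off that set. What you actually need is only $J(V_j^{c})<h_{top}$ for $j$ large, and what makes it (and in fact $J(V_j^{c})\to 0$) true here is the affineness of the entropy map (equivalently, the convexity of $J$). Concretely -- and this is the device the paper uses in its proof of Theorem 4 -- take $m_1\in\mathcal{P}_e(0)$ and a zero-entropy invariant measure $m_0$ (a periodic-orbit measure, say), and set $m_\alpha=(1-\alpha)m_1+\alpha m_0$. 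Since entropy is affine, $h(m_\alpha)=(1-\alpha)h_{top}<h_{top}$, so $m_\alpha\notin\mathcal{P}_e(0)$ and, by Lemma 1 and Remark 1, $J(m_\alpha)=h_{top}-h(m_\alpha)=\alpha h_{top}$. For any fixed $\alpha\in(0,1)$, once $j$ is large enough that $m_\alpha\in V_j^{c}$ one gets $J(V_j^{c})\leq \alpha h_{top}<h_{top}$; letting $\alpha\downarrow 0$ gives $J(V_j^{c})\to 0$. Replacing your sentence with this argument closes the gap and the rest of your proof goes through.
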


\section{Proofs}
\subsection{Growth of geodesic arcs}
The following two theorems of Ma\~n\'e
\cite{Ma} and Paternain \cite{Pat} are the main tools in the
proof of our results.
\begin{theorem}[R Ma\~n\'e \cite{Ma}] Let $M$ be a closed and
connected manifold equipped with a $C^{\infty }$ metric. Then 
\begin{enumerate}
\item
\[
h_{top}=\lim_{T \rightarrow \infty} \frac{1}{T}\log \int_{M\times M}
\# G_{T}(x,y)dxdy. 
\]
\item If $M$ has no conjugate points, for all $(x,y)$
\[
h_{top}=\lim_{T \rightarrow \infty} \frac{1}{T}\log 
\# G_{T}(x,y). 
\]
\item Suppose that the metric is of class $C^{3}$ and $M$ does not
  have conjugate points. Then for any $\delta >0$ and all $(x,y)$
  we have
\[
h_{top}=\lim_{T \rightarrow \infty} \frac{1}{T}\log 
\# G_{\delta, T}(x,y).
\]
\end{enumerate}
\end{theorem}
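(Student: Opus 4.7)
The plan is to prove all three formulas by a single underlying identity and then specialize it under the extra hypotheses of parts (2)--(3). For $x \in M$, the geodesic arcs of length $\leq T$ from $x$ to $y$ are in bijection with the preimages $\exp_x^{-1}(y) \cap B_T(0) \subset T_xM$, so the area formula gives
\[
\int_M \#G_T(x,y)\,dy = \int_{B_T(0)\subset T_xM} |\det d\exp_x(v)|\,dv,
\]
and integrating in $x$ converts the left side of (1) into the total volume swept by the geodesic flow image of the radius-$T$ disk bundle, counted with multiplicity. The heart of the proof is to identify the exponential growth rate of this quantity with $h_{top}$.

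For the upper bound in part (1), I would cover a ball of radius $T$ in the universal cover $\widetilde{M}$ by dynamical balls of the geodesic flow and invoke Yomdin's theorem, whose $C^\infty$ hypothesis is essential: it bounds the exponential growth of smooth local volumes by $h_{top}$. For the matching lower bound, I would invoke the variational principle: starting from an invariant measure $\mu$ with $h(\mu)$ arbitrarily close to $h_{top}$ (such a $\mu$ exists thanks to Newhouse's upper semicontinuity of the entropy map, already cited in the introduction), one produces many $(T,\epsilon)$-separated pieces of orbit, and by standard closing arguments these can be matched with connecting geodesic arcs, yielding the required exponential count after integrating over the endpoints.

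For part (2), the no-conjugate-points hypothesis makes $\exp_x \colon T_xM \to \widetilde{M}$ a covering map, so the Jacobian in the area formula never vanishes and $\#G_T(x,y)$ becomes comparable, uniformly in $(x,y)$, to the volume of a ball of radius $T$ in $\widetilde{M}$; compactness of $M$ and a pigeonhole/Besicovitch-type covering argument then promote the averaged identity of (1) into a pointwise one. For part (3), the windowed count $\#G_{\delta,T}(x,y)$ is compared to $\#G_T(x,y)$ by showing that the density in $T$ of arcs does not concentrate thinly in any interval of length $\delta$; this is exactly where the $C^3$ hypothesis enters, through control on the second derivatives of $\exp_x$ and hence on Jacobi fields and their derivatives, providing enough regularity in $T$ of the integrand to rule out such concentration.

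The main obstacle is the sharpness of (1) without any curvature assumption: one must squeeze the equality purely out of $C^\infty$ regularity, via Yomdin--Newhouse. The subtle point is to keep the local volume-growth estimates uniform in the base point $x$ so that, after integration against $dx\,dy$, no exponential information is lost and the matching lower bound from the variational principle can be combined with the upper bound on the nose.
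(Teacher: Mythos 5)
Theorem~8 is attributed to Ma\~n\'e and is \emph{cited}, not proved, in this paper: the section titled ``Growth of geodesic arcs'' simply records it (together with Paternain's Theorem~9) as the black-box input on which all the proofs in the paper rest, and then refers the reader to \cite{Ma}. There is therefore no proof in the paper against which your sketch can be measured.

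That said, a few comments on your sketch relative to Ma\~n\'e's actual argument in \cite{Ma}. Your starting point is correct: the area formula identity
\[
\int_M \#G_T(x,y)\,dy = \int_{B_T(0)\subset T_xM} |\det d\exp_x(v)|\,dv
\]
is indeed the reduction Ma\~n\'e uses, and the upper bound in part (1) does come from Yomdin theory, which is exactly why the $C^\infty$ hypothesis is present. The lower bound, however, is not obtained by a ``closing argument'': on a general closed manifold (no hyperbolicity, no shadowing), you cannot close approximate orbits into genuine geodesic arcs with prescribed endpoints. What one actually needs, and what Ma\~n\'e provides, is a Katok-type construction: starting from an ergodic measure with entropy near $h_{top}$, produce exponentially many $(T,\epsilon)$-separated orbit segments, and observe that the initial and terminal footpoints of these segments fill small product sets in $M\times M$ with multiplicity, so the count survives integration against $dx\,dy$; no closing is involved. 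For part (2), your claim that $\#G_T(x,y)$ becomes ``comparable, uniformly in $(x,y)$'' to the volume of a $T$-ball in $\widetilde M$ is too strong as stated — the Jacobian of $\exp_x$ in the absence of conjugate points is nonvanishing but not uniformly bounded away from $0$ and $\infty$ independently of $T$ — and the passage from the averaged statement (1) to the pointwise ``for all $(x,y)$'' statement (2) is genuinely one of the harder points of Ma\~n\'e's paper (the paper's Lemma~2, quoted from \cite{Ma} Lemma~4.3, only yields almost-every $(x,y)$, and the strengthening to \emph{all} $(x,y)$ uses the no-conjugate-points geometry in an essential way). For part (3), the role of the $C^3$ hypothesis is to guarantee enough regularity of Jacobi fields to control how $\#G_T(x,y)$ grows in $T$; your heuristic that it prevents the count from concentrating in thin $\delta$-windows is the right intuition, but you would need to actually carry out the derivative estimates. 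In short, the architecture you propose matches Ma\~n\'e's, but the lower bound mechanism in (1) and the passage to all base points in (2) are more delicate than your sketch acknowledges.
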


\begin{theorem}[G P Paternain \cite{Pat}] Let $M$ be a closed and
  connected manifold equipped with a $C^{\infty }$ metric.
\begin{enumerate}
\item For any $\delta >0$
\[
P(F)=\lim_{T\rightarrow +\infty }\frac{1}{T}\log \int_{M\times M}\left(
\sum_{\gamma _{xy} \in G_{\delta,T}}e^{\int_{\gamma _{xy}}F}\right) dxdy.
\]
\item If $P(F) \geq 0$
\[
P(F)=\lim_{T\rightarrow +\infty }\frac{1}{T}\log \int_{M\times M}\left(
\sum_{\gamma _{xy} \in G_{T}}e^{\int_{\gamma _{xy}}F}\right) dxdy.
\]

\end{enumerate}
\end{theorem}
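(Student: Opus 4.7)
The plan is to reduce both identities to the Bowen definition of the topological pressure through the exponential map. Set $d=\dim M$. For a fixed base point $x\in M$, unit-speed geodesic arcs $\gamma_{xy}$ of length $t$ are in bijection with the unit vectors $v\in S_xM$ satisfying $\exp_x(tv)=y$, as $y$ varies in $M$. Applying the coarea formula to $\exp_x$ in polar coordinates gives
\[
\int_M\sum_{\gamma_{xy}\in G_{\delta,T}(x,y)}e^{\int_{\gamma_{xy}}F}\,dy=\int_{T-\delta}^{T}\int_{S_xM}e^{S_tF(v)}\,t^{d-1}|\det d_{tv}\exp_x|\,dv\,dt,
\]
where $S_tF(v):=\int_0^tF(\phi_sv)\,ds$. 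Integrating in $x$ with the Riemannian volume reduces the quantities in (1) and (2) to integrals over $SM$ of $e^{S_tF}$ weighted by the Jacobian of the exponential map; the whole task is to show that for large $t$ this integral grows as $e^{tP(F)}$.

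\textbf{Upper bound in (1).} I would use the Bowen spanning-set form of the variational principle: for a fixed small scale $\varepsilon>0$,
\[
P(F)=\lim_{\varepsilon\to 0}\limsup_{t\to\infty}\frac{1}{t}\log\inf_{E}\sum_{v\in E}e^{S_tF(v)},
\]
where $E$ ranges over $(t,\varepsilon)$-spanning sets in $SM$ with respect to the Sasaki metric. Covering the inner integral $\int_{S_xM}e^{S_tF(v)}t^{d-1}|\det d_{tv}\exp_x|\,dv$ by Bowen balls centred on points of a spanning set, and using that the Jacobian $|\det d_{tv}\exp_x|$ is uniformly controlled on a compact manifold by Jacobi-field estimates, integration in $x$ produces
\[
\int_{M\times M}\sum_{\gamma_{xy}\in G_{\delta,T}}e^{\int_{\gamma_{xy}}F}\,dx\,dy\le C(\delta,\varepsilon)\,e^{t(P(F)+o(1))}.
\]

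\textbf{Lower bound in (1).} I would start from an equilibrium state $m$ for $F$, which exists by upper semicontinuity of the entropy as recalled in the paper, and apply Katok's pressure formula: for every $\eta>0$ and every sufficiently small $\varepsilon>0$ there is a $(t,\varepsilon)$-separated set $E_t\subset SM$ with $\sum_{v\in E_t}e^{S_tF(v)}\ge e^{t(P(F)-\eta)}$. To each $v\in E_t$ I attach the geodesic arc from $\pi v$ to $\pi\phi_tv$ and then thicken in the base by integrating over a small Sasaki neighborhood of $v$ pushed down by $(\pi,\pi\circ\phi_t)$. The $(t,\varepsilon)$-separation, combined with the Jacobian bound from the first step, guarantees that the contributions stay essentially disjoint on a subset of positive product Lebesgue measure in $M\times M$, producing the matching lower bound. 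Part~(2) follows from part~(1) by decomposing $[0,T]$ into $O(T/\delta)$ windows of length $\delta$: under $P(F)\ge 0$ the last window dominates, and the exponential growth rate is unchanged.

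\textbf{Main obstacle.} The delicate step is the lower bound: converting $(t,\varepsilon)$-separation of initial vectors in $SM$ into an effective lower estimate for integrals indexed by $(x,y)\in M\times M$. This needs Sard's theorem, in the spirit of Ma\~n\'e and Paternain, so that generic endpoints are regular values of $\exp_x$ and arcs are counted with the correct multiplicity, together with a uniform lower control of $|\det d_{tv}\exp_x|$ away from conjugate vectors to keep the image in $M\times M$ of each Sasaki neighborhood from collapsing. Once that Jacobian control is secured, the rest is a relatively routine accounting using the Bowen and Katok characterisations of pressure, with part~(1) of Ma\~n\'e's Theorem~8 serving as the unweighted benchmark $F\equiv 0$.
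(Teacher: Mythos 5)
First, a framing remark: the paper does not prove Theorem~9 at all --- it is Paternain's theorem, invoked from \cite{Pat} as a black box (just as Theorem~8 is Ma\~n\'e's). So there is no ``paper's own proof'' to match against; what can be assessed is whether your sketch is a viable reconstruction of the cited result.

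Your opening reduction is correct and does agree with the first step in Ma\~n\'e's and Paternain's arguments: by the area formula in polar coordinates, $\int_{M\times M}\sum_{\gamma_{xy}\in G_{\delta,T}}e^{\int_{\gamma_{xy}}F}\,dx\,dy$ becomes an integral over $SM$ of $e^{S_tF(v)}$ weighted by $t^{d-1}\lvert\det d_{tv}\exp_x\rvert$, which in turn is a Jacobi-field determinant, i.e.\ essentially $\lVert\wedge^{d-1}(d_v\phi_t)\rvert_{V}\rVert$ for the vertical subbundle $V$. This is where the routine part ends and the real difficulty begins, and it is exactly here that your sketch breaks down.

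The upper bound is where the gap lies. You assert that $\lvert\det d_{tv}\exp_x\rvert$ is ``uniformly controlled on a compact manifold by Jacobi-field estimates.'' It is not: this determinant grows exponentially in $t$, with rate governed by the (unstable) Lyapunov exponents --- on a negatively curved manifold it grows like $e^{th_{\mathrm{top}}}$. The whole content of Theorem~9 is that after multiplying by $e^{S_tF}$ and integrating, the exponential growth rate is \emph{exactly} $P(F)$ rather than something larger, and this does not follow from covering $S_xM$ by Bowen balls and invoking Bowen's spanning-set definition of pressure. The obstruction is multiplicity/folding: the restriction of $\exp_x(t\,\cdot)$ to a Bowen ball need not be injective, and the integral of the Jacobian over a Bowen ball is the volume of the image counted with multiplicity. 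Controlling that multiplicity subexponentially in $t$ is precisely what Yomdin's theorem does for $C^{\infty}$ maps; it enters Ma\~n\'e's argument through Przytycki's inequality $\limsup_T\frac{1}{T}\log\int_{SM}\lVert\wedge^{d-1}d\phi_T\rVert\,d\mu_L\le h_{\mathrm{top}}$, and Paternain's extension of it to the pressure setting. This is the step your sketch replaces by handwaving, and it is also the step that explains why the theorem genuinely requires a $C^{\infty}$ metric (the estimate fails at finite regularity). A Bowen-ball covering of $S_xM$ cannot substitute for this: Bowen balls live in $SM$, their intersections with a single fibre $S_xM$ are uncontrolled, and nothing in the spanning-set definition bounds the degree of $\exp_x$ on them. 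Your lower bound sketch has the same flavour of optimism (``uniform lower control of $\lvert\det d_{tv}\exp_x\rvert$ away from conjugate vectors'' plus Sard), but the published lower bound goes through the variational principle and Ruelle's inequality applied to the Jacobian cocycle, not through Katok's separated-set formula pushed down by $(\pi,\pi\circ\phi_t)$. In short: the reduction and the general thermodynamic bookkeeping are sound, but the one estimate that makes the theorem true --- the Przytycki--Yomdin control of the exponential-map Jacobian --- is absent, and the phrases meant to cover it assert something false.
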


For any invariant probability measure $\mu$ we set
\begin{equation}
I(\mu):=P(F)-(h(\mu)+\int_{SM} Fd\mu).
\end{equation}
\begin{lemma} 
\begin{enumerate}
\item The functional $Q_{F}$ is convex and continuous on continuous functions. 
\item $Q(\omega)=\sup_{\mu \in \mathcal{P}_{inv}(SM)}(\int \omega
  d\mu-I(\mu))$. In other words, the functionals $I$ and $J_{F}$ agree on
  invariant measures.
\end{enumerate}
\end{lemma}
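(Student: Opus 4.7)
The plan for part (1) is to read everything off the variational principle $P(F+\omega)=\sup_{m\in\mathcal{P}_{inv}(SM)}(h(m)+\int(F+\omega)dm)$. This exhibits $\omega\mapsto P(F+\omega)$ as a pointwise supremum of affine continuous functionals of $\omega$, which gives convexity of $\omega\mapsto P(F+\omega)$ and hence of $Q_{F}$. The same representation yields $|P(F+\omega_{1})-P(F+\omega_{2})|\leq \|\omega_{1}-\omega_{2}\|_{\infty}$, so $Q_{F}$ is in fact $1$-Lipschitz in the uniform norm on $C_{\real}(SM)$, which is stronger than continuity.

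For the first identity of part (2) I would simply substitute $I(m)=P(F)-h(m)-\int Fdm$ into the right-hand side: rewriting $\int\omega dm-I(m)=h(m)+\int(F+\omega)dm-P(F)$ and taking the supremum over $m\in\mathcal{P}_{inv}(SM)$ produces $P(F+\omega)-P(F)=Q_{F}(\omega)$ by the variational principle. This identity also delivers one half of the equality $J_{F}=I$ on invariant measures: for any invariant $\mu$ we have $Q_{F}(\omega)\geq \int\omega d\mu-I(\mu)$, hence $\int\omega d\mu-Q_{F}(\omega)\leq I(\mu)$ for every continuous $\omega$, so taking the supremum over $\omega$ yields $J_{F}(\mu)\leq I(\mu)$.

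For the reverse inequality $J_{F}(\mu)\geq I(\mu)$ on invariant $\mu$ I would invoke the Fenchel--Moreau biconjugate theorem in the dual pair consisting of $C_{\real}(SM)$ and the space of finite signed Borel measures on $SM$. Extend $I$ to all such measures by declaring $I\equiv +\infty$ off $\mathcal{P}_{inv}(SM)$. This extension is convex, because $\mathcal{P}_{inv}(SM)$ is convex, entropy is affine on $\mathcal{P}_{inv}(SM)$, and $m\mapsto\int Fdm$ is linear. It is weak-$*$ lower semicontinuous because $\mathcal{P}_{inv}(SM)$ is weak-$*$ closed and, since the metric is $C^{\infty}$, Newhouse's theorem renders $m\mapsto h(m)$ upper semicontinuous on $\mathcal{P}_{inv}(SM)$. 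By the first identity of (2) the Fenchel conjugate of this extended $I$ equals $Q_{F}$, and its biconjugate at $\mu$ equals $J_{F}(\mu)$ by the definition of $J_{F}$. Thus Fenchel--Moreau yields $J_{F}(\mu)=I^{**}(\mu)=I(\mu)$ for every invariant $\mu$. The main obstacle is verifying the lower semicontinuity hypothesis of Fenchel--Moreau: it is precisely here that the $C^{\infty}$ assumption is essential, through Newhouse's upper semicontinuity of entropy, without which the duality argument would not close.
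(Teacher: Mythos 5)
Your proof is correct, and for part (1) and the displayed identity of part (2) it coincides with the paper's argument: convexity of $P(F+\cdot)$ as a pointwise supremum of affine functionals coming from the variational principle, the Lipschitz estimate $|P(f)-P(g)|\leq\|f-g\|_{\infty}$, and the direct computation $\sup_{\mu\in\mathcal{P}_{inv}(SM)}(\int\omega\,d\mu-I(\mu))=P(F+\omega)-P(F)$. Where you add genuine content is in the ``in other words'' clause, namely that $I=J_F$ on invariant measures. The paper gives no separate argument for this and treats it as an immediate restatement of the displayed identity; strictly speaking it is a biconjugation claim and requires Fenchel--Moreau. You close the gap correctly: the inequality $J_F\leq I$ on invariant measures is immediate from the displayed identity, while the reverse inequality needs the extension of $I$ by $+\infty$ off $\mathcal{P}_{inv}(SM)$ to be proper, convex, and weak-$*$ lower semicontinuous in the pairing with $C_{\real}(SM)$. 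Your observation that it is precisely Newhouse's upper semicontinuity of entropy (hence the $C^{\infty}$ hypothesis) that delivers lower semicontinuity of the extended $I$, and thereby lets the duality close, is exactly the point the paper leaves implicit. This is a welcome completion of the paper's proof rather than a different route.
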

\begin{proof}
Part $(1)$ is a consequence of the convexity of the pressure function
$P$ and the variational principle $(1)$ from which we can
easily deduce that $|P(f)-P(g)|\leq \|f-g\|_{\infty}$ \cite{Wa}. 
Part $(2)$ follows from $(7)$ and,
\begin{eqnarray*}
&&\sup_{\mu \in \mathcal{P}_{inv}(SM)}(\int \omega  d\mu-I(\mu))\\
&=& \sup_{\mu \in \mathcal{P}_{inv}(SM)}(\int \omega  d\mu-
P(F)+h(\mu)+\int Fd\mu)\\
&=&P(F+\omega)-P(F)=Q_{F}(\omega).
\end{eqnarray*}
\end{proof}

\begin{remark}  $Q_{F}$ is invariant by the
geodesic flow : $Q_{F}(\omega \circ \phi_{t})=Q_{F}(\omega)$, for all $t$ and
continuous function $\omega$. Thus, a probability measure $m$
satisfies $J_{F}(m)=0$ if and only if $m$ is invariant and $I(m)=0$. In
particular, if $K$ is a closed subset of $\mathcal{P}(SM)$ and
$\inf_{m\in K}J_{F}(m)=J_{F}(\mu)$ for some $\mu \in K$, then $\inf_{m\in
K}J_{F}(m)=0$ iff $\mu$ is invariant and $h(\mu)+\int Fd\mu=P(F)$. In
other words we have $\mathcal{P}_{e}(F)=\{J_{F}=0\}$.
\end{remark}
\subsection{Proof of Theorem 1 ($1$)}
\begin{proof}
We have to prove
\[
\limsup_{T\rightarrow \infty}\frac{1}{T}\log \nu_{T}(K) \leq
-\inf_{m\in K}J(m):=-J(K).
\]
Let $\epsilon >0$. There exists a finite number of continuous
functions $\omega_{1}, \cdots, \omega_{l}$ such that $K\subset
\cup_{i=1}^{l}K_{i}$, where 
\[
K_{i}=\{m\in \mathcal{P}(SM): \int \omega_{i}dm-Q(\omega_{i})>J(K)-\epsilon\}.
\]
Put
\[
\Gamma_{i}(x,y,T):=\{\gamma_{xy}\in G_{\delta,T}
:\delta_{\gamma _{xy}}\in K_{i}\}, 
\]and
\[
Z_{i}(T):=\int_{M\times M} \sum_{\gamma_{xy}\in
  \Gamma_{i}(x,y,T)}e^{\int_{\gamma_{xy}}F} dxdy.
\]
We have $\nu_{T}(K) \leq \sum_{i=1}^{l}\nu_{T}(K_{i})$ and
\[
\nu_{T}(K_{i})=\frac{Z_{i}(T)}
{\int_{M\times M}(\sum_{\gamma_{xy}\in G_{\delta,T}}
e^{\int_{\gamma_{xy}}F})dxdy}.
\]
We have
\[
Z_{i}(T)\leq \int_{M\times M}\sum_{\gamma_{xy}\in \Gamma_{i}(x,y,T)}
e^{\int_{\gamma_{xy}}F}e^{l(\gamma_{xy})(\int
  \omega_{i}d\delta_{\gamma _{xy}}-
Q(\omega_{i})-(J(K)-\epsilon))}dxdy.
\]
Set $C:=\sum_{i\leq l}\sup(1, e^{-\delta(-Q(\omega_{i})-(J(K)-\epsilon))})$.
Thus, by taking into account the sign of $-Q(\omega_{i})-(J(K)-\epsilon)$,
\begin{eqnarray*}
Z_{i}(T) &\leq& \int_{M\times M}\sum_{\gamma_{xy}\in \Gamma_{i}(x,y,T)}
e^{\int_{\gamma_{xy}}(F+\omega_{i})}e^{l(\gamma_{xy})\left ( 
-Q(\omega_{i})-(J(K)-\epsilon)\right )}dxdy\\
&\leq& C e^{T\left ( 
-Q(\omega_{i})-(J(K)-\epsilon)\right )}
\int_{M\times M}\sum_{\gamma_{xy}\in \Gamma_{i}(x,y,T)}
e^{\int_{\gamma_{xy}}(F+\omega_{i})}dxdy.
\end{eqnarray*}
For $T$ sufficiently large, it follows from Theorem $9$ $(1)$,
\begin{eqnarray*}
\nu_{T}(K)& \leq& \sum_{i=1}^{l} \frac{Z_{i}(T)}{\int 
\sum_{\gamma_{xy}\in G_{\delta,T}} e^{\int_{\gamma_{xy}}F}dxdy}\\
&\leq&
C\sum_{i=1}^{l}e^{T(P(F+\omega_{i})+\epsilon)}e^{-T(P(F)-\epsilon)}
e^{T(-Q(\omega_{i})-(J(K)-\epsilon)}\\
&=&Cl e^{T(-J(K)+3\epsilon)}.
\end{eqnarray*}
Take the logarithme, divide by $T$ and the $\limsup$,
\[
\limsup_{T\rightarrow \infty}\frac{1}{T}\log \nu_{T}(K) \leq -J(K)+3\epsilon.
\]
$\epsilon$ being arbitrary, this proves Theorem $1$ $(1)$.
\end{proof}
\subsection{Proof of Corollary 1} 
\begin{proof}
It suffices to apply Theorem $1$ to
the closed set $K=\mathcal{P}(SM)\backslash V$. We have $J(K)=J(m)$
for some $m\in K$ and $\nu_{T}(V)=1-\nu_{T}(K)$. By Remark $1$,
$J(m)>0$ and for $T$ sufficiently large,
\[
1\geq \nu_{T}(V) \geq 1-e^{TJ(m)}.
\]
\end{proof}
\subsection{Proof of Theorem 2 ($2$)}
\begin{proof}
Recall that $\mathcal{P}_{g,\alpha}(SM)=\{m\in
\mathcal{P}(SM): \int gdm=\alpha\}$ and
\begin{equation}
J_{g}(\alpha)= \inf (J(m): m\in
\mathcal{P}_{g,\alpha}(SM))
\end{equation}
\begin{equation}
J_{g}(O_{n}):=\inf_{\alpha \in O_{n}}J_{g}(\alpha).
\end{equation}
Given a continuous function $g:SM\rightarrow \real^{n}$ we set $\beta \cdot
g=\sum_{i=1}^{n}\beta{i}g_{i}$ for $\beta \in \real^{n}$.
Then, by definition of the function $Q$, we have
\begin{equation}
Q(\beta \cdot g)=\sup_{\alpha \in \real^{n}} (\beta \cdot \alpha -
J_{g}(\alpha)), 
\end{equation}
and by duality,
\begin{equation}
J_{g}(\alpha)=\sup_{\beta \in \real^{n}} (\beta \cdot \alpha - Q(\beta
\cdot g)). 
\end{equation}

If $J_{g}(O_{n})= +\infty$ then there is nothing to do. Suppose
then $J(O_{n})< +\infty$.
Let $\varepsilon>0$ and choose $\alpha_{\varepsilon} \in O_{n}$ with
$\mathcal{P}_{g,\alpha_{\varepsilon}}(SM) \ne \emptyset$ such that
\[
J_{g}(O_{n})>J_{g}(\alpha_{\varepsilon})-\varepsilon.
\]
We know from (\cite{Roc} Theorem 23.4 and 23.5) that, given $\alpha$ in
the interior of the affine hull of the domain $D(J_{g})$ of $J_{g}$,
there exists $\beta \in \real^{n}$ such that 
\[
Q(\beta\cdot g)=\beta \cdot\alpha - J_{g}(\alpha).
\]
Let then $\beta_{\varepsilon}\in \real^{n}$ such that
\begin{equation}
Q(\beta_{\varepsilon}\cdot g)=
\beta_{\varepsilon} \cdot \alpha_{\varepsilon}-
J_{g}(\alpha_{\varepsilon}).
\end{equation}
Consider now a small neighborhood of $\alpha_{\varepsilon}$,
\[
O_{n,r}:=\{ \alpha \in \real^{n}:|\alpha_{\varepsilon}-\alpha|\leq r\},
\]
such that $O_{n,r} \subset O_{n}$. Define for any $E \subset \real^{n}$ :
\[
\Gamma_{T}(E):=\{\gamma_{xy}\in G_{\delta,T}:
\delta_{\gamma_{xy}}(g) \in E\}
\]
\[
Z_{T}(E):=
\frac{\int \left(
\sum_{\gamma _{xy}\in \Gamma_{T}(E)}e^{\int_{\gamma
    _{xy}}F}\right) dxdy}{\int \left( \sum_{\gamma _{xy} \in
    G_{\delta,T}}e^{\int_{\gamma _{xy}}F}\right) dxdy}.
\]
We have, $Z_{T}(O_{n})\geq Z_{T}(O_{n,r})$ and
\begin{eqnarray*}
&&\sum_{\gamma _{xy}\in \Gamma_{T}(O_{n,r})}e^{\int_{\gamma _{xy}}F}\\
&=& 
e^{-T\beta_{\varepsilon} \cdot \alpha_{\varepsilon}}
\sum_{\gamma _{xy}\in \Gamma_{T}(O_{n,r})} e^{\int_{\gamma
  _{xy}}F}e^{-T(\beta_{\varepsilon}\cdot 
(\delta_{\gamma_{xy}}(g)-\alpha_{\varepsilon}))}  
  e^{T\beta_{\varepsilon}\cdot \delta_{\gamma_{xy}}(g)} \\
&\geq& 
e^{-T\beta_{\varepsilon} \cdot \alpha_{\varepsilon}}
e^{-r\|\beta_{\varepsilon}\|T}
\sum_{\gamma _{xy}\in \Gamma_{T}(O_{n,r})}e^{\int_{\gamma _{xy}}F}
  e^{T\beta_{\varepsilon}\cdot \delta_{\gamma_{xy}}(g)} \\
&=& 
e^{-T\beta_{\varepsilon} \cdot \alpha_{\varepsilon}}
e^{-r\|\beta_{\varepsilon}\|T}
\sum_{\gamma _{xy}\in \Gamma_{T}(O_{n,r})}e^{\int_{\gamma _{xy}}F}
  e^{l(\gamma _{xy})\beta_{\varepsilon}\cdot 
\delta_{\gamma_{xy}}(g)} 
e^{(T-l(\gamma _{xy}))\beta_{\varepsilon}\cdot 
\delta_{\gamma_{xy}}(g)}. 
\end{eqnarray*}
By the condition, $0\leq T-l(\gamma _{xy}) \leq \delta$, we have
\[
e^{(T-l(\gamma _{xy}))\beta_{\varepsilon}\cdot 
\delta_{\gamma_{xy}}(g)}
\geq e^{-\delta \|\beta_{\varepsilon}\cdot g \|_{\infty}}.
\]
Thus
\[
\sum_{\gamma _{xy}\in \Gamma_{T}(O_{n,r})}e^{\int_{\gamma_{xy}}F}
\geq e^{-T \beta_{\varepsilon} \cdot \alpha_{\varepsilon}}
e^{-r\|\beta_{\varepsilon}\| T}
e^{-\delta \|\beta_{\varepsilon}\cdot g \|_{\infty}}
\sum_{\gamma_{xy}\in \Gamma_{T}(O_{n,r})}
e^{\int_{\gamma_{xy}}(F+\beta_{\varepsilon}\cdot g)}.
\]
Set
\[
Z_{T}^{\varepsilon}(O_{n,r}):=
\frac{\int (\sum_{\gamma_{xy}\in\Gamma_{T}(O_{n,r})}
e^{\int_{\gamma_{xy}}(F+\beta_{\varepsilon}\cdot g)})dxdy} 
{\int ( \sum_{\gamma_{xy} \in G_{\delta,T}}
e^{\int_{\gamma_{xy}}(F+\beta_{\varepsilon}\cdot g)})dxdy}  
\]
and
\[
Z_{T}(\beta_{\varepsilon}\cdot g):=\frac{1}{T}\log 
\frac{\int (\sum_{\gamma_{xy}\in G_{\delta,T}}
e^{\int_{\gamma_{xy}}(F+\beta_{\varepsilon}\cdot g)})dxdy} 
{\int \sum_{\gamma_{xy} \in G_{\delta,T}}
e^{\int_{\gamma_{xy}}F}dxdy}.
\]
Therefore,
\begin{equation}
\frac{1}{T}\log Z_{T}(O_{n,r}) \geq 
-r\|\beta_{\varepsilon}\|-
\frac{\delta \|\beta_{\varepsilon} \cdot g \|_{\infty}}{T}+ 
(Z_{T}(\beta_{\varepsilon}\cdot g)-\beta_{\varepsilon}\cdot
\alpha_{\varepsilon})+ 
\frac{1}{T}\log Z_{T}^{\varepsilon}(O_{n,r}), 
\end{equation}
From Theorem $9$ $(1)$ and by definition of $Q=Q_{F}$ (see $(2)$) we get,
\[
\lim_{T\rightarrow \infty}
Z_{T}(\beta_{\varepsilon}\cdot g)=
P(F+\beta_{\varepsilon}\cdot g)-P(F)=Q(\beta_{\varepsilon}\cdot g).
\]
Thus
\[
\liminf_{T\rightarrow \infty}\frac{1}{T}\log Z_{T}(O_{n})\geq
-r\|\beta_{\varepsilon}\|+
 (Q(\beta_{\varepsilon}\cdot g)-\beta_{\varepsilon}\cdot
\alpha_{\varepsilon})+\lim_{T\rightarrow \infty}\frac{1}{T}\log
 Z_{T}^{\varepsilon}(O_{n,r}).
\]
We will show that
\begin{equation}
\lim_{T\rightarrow \infty} Z_{T}^{\varepsilon}(O_{n,r})=1.
\end{equation}
Let us see how to finish the proof using $(14)$ :
\begin{eqnarray*}
\liminf_{T\rightarrow \infty} \frac{1}{T}\log Z_{T}(O_{n})&\geq&
-r\|\beta_{\varepsilon}\|+Q(\beta_{\varepsilon}\cdot g)-\beta_{\varepsilon}
\cdot \alpha_{\varepsilon}\\ 
&=&-r\|\beta_{\varepsilon}\|-J_{g}(\alpha_{\varepsilon}) \\
&\geq& -r\|\beta_{\varepsilon}\|-J_{g}(O_{n}) -\varepsilon, 
\end{eqnarray*}
for any $\varepsilon >0$. Since $r>0$ was arbitray choosen, we let
$r\rightarrow 0$ and $\varepsilon \rightarrow 0$ respectively and we get
$\liminf_{T\rightarrow \infty} \frac{1}{T}\log Z_{T}(O_{n})\geq
-J_{g}(O_{n})$ which completes the proof Theorem $2$ ($2$). 

It remains to show ($14$). Let $K_{n,r}$ be the complement set of
$O_{n,r}$ in the image 
$g*(\mathcal{P}(SM))$ of $\mathcal{P}(SM)$ under the continuous map
$g* : m\rightarrow g\cdot m$. We have
$Z_{T}^{\varepsilon}(O_{n,r})+Z_{T}^{\varepsilon}(K_{n,r})=1$.
The goal is to show that $Z_{T}^{\varepsilon}(K_{n,r})$ decrease exponentially
fast to zero as $T\rightarrow \infty$ using Theorem $1$.

Consider $J^{\varepsilon}:=J_{F+\beta_{\varepsilon}\cdot g}$ which is
the functional $J$ corresponding to
$Q^{\varepsilon}:=Q_{F+\beta_{\varepsilon}\cdot g}$. We have
\[
Q^{\varepsilon}(\omega) =P(F+\beta_{\varepsilon}\cdot
g+\omega)-P(F+\beta_{\varepsilon}\cdot g), 
\]
and by $(3)$,
\[
J^{\varepsilon}(m)=\sup_{\omega}(\int \omega dm -Q^{\varepsilon}(\omega)).
\]
From this we deduce easily that
\[
J^{\varepsilon}(m)=J(m)+Q(\beta_{\varepsilon}\cdot g)-\int
\beta_{\varepsilon}\cdot g dm, 
\]
and
\[
\inf_{m(g)=\alpha}J^{\varepsilon}(m)=\inf_{m(g)=\alpha}J(m)
+Q(\beta_{\varepsilon}\cdot g)-\beta_{\varepsilon}\cdot \alpha.
\]
The set $K_{n,r}$ is compact in $\real^{n}$ and by Theorem $1$ ($1$),
\[
\limsup_{T\rightarrow \infty} \frac{1}{T}\log
Z_{T}^{\varepsilon}(K_{n,r}) \leq - J^{\epsilon}(K),
\]
with $K:=(g*)^{-1}(K_{n,r})$ which is a closed subset of
$\mathcal{P}(SM)$.
We have,
\[
J^{\varepsilon}(K)=\inf_{\alpha \in
  K_{n,r}}(J_{g}(\alpha)+Q(\beta_{\varepsilon}\cdot
g)-\beta_{\varepsilon}\cdot \alpha). 
\]
If $J^{\varepsilon}(K)=+\infty$ there is nothing to do and the
result follows. The key point is to prove that $J^{\varepsilon}(K)>0$.
Set
\[
J_{g}^{\varepsilon}(\alpha):=
J_{g}(\alpha)+Q(\beta_{\varepsilon}\cdot g)-\beta_{\varepsilon}   
\cdot \alpha.
\]
The functional $J^{\varepsilon}$ is non negative (since
$Q^{\varepsilon}(0)=0$), lower 
semicontinuous and then it achieves its minimum on compact sets. 
We have $J_{g}^{\varepsilon}(\alpha)\geq 0$ and 
$J_{g}^{\varepsilon}(\alpha_{\varepsilon})=0$ (see $(12)$). Recall that,
if $J_{g}^{\varepsilon}(\alpha)=0$ for some $\alpha \in K_{n,r}$, then
there will 
correspond to $\alpha$ an equilibrium state $m_{\alpha}\in K$ for the
potential $F+\beta_{\varepsilon}\cdot g$ such that $m_{\alpha}(g)=\alpha$. 
The vector $\alpha_{\varepsilon}$ is the unique point realizing the
minimum, i.e the unique solution for the
equation $J_{g}^{\varepsilon}(\alpha)=0$.
Indeed, two different solutions will produce two distinct
equilibrium states for the potential $F+\beta_{\varepsilon}\cdot
g$ which contradicts our standing assumption of Theorem $1$.
Since $\alpha_{\varepsilon} \in O_{n,r}$, then
$J_{g}^{\varepsilon}(\alpha)>0$ for $\alpha \in K_{n,r}$. On the other hand
the set $K_{n,r}$ being compact, by the lower semicontinuity of
$J^{\varepsilon}$ we have $J^{\varepsilon}(K)=\inf_{\alpha \in
  K_{n,r}} J_{g}^{\varepsilon}(\alpha)>0$. Thus we have proved that
\[
\limsup_{T\rightarrow \infty} \frac{1}{T}\log
Z_{T}^{\varepsilon}(K_{n,r}) \leq -J^{\varepsilon}(K)<0
\]
from which ($14$) follows immediately.
\end{proof}

\subsection{Proof of Theorem $1$ ($2$)}
\begin{proof}
Let $O \subset \mathcal{P}(SM)$ be an open set and $\epsilon
>0$. Choose $m_{\epsilon} \in O$ such that 
\[
J(m_{\epsilon}) \leq J(O) +\epsilon.
\]
We endow the space $\mathcal{P}(SM)$ with
a compatible topology generated by the distance given by 
\[
d(m,m'):=\sum_{k=1}^{\infty}2^{-k}|m(g_{k})-m'(g_{k})|,
\] where the functions $g_{k}$ were defined in section $2.2$.
Following \cite{Ki} we define,
\[
d_{n}(m,m'):=\sum_{k=1}^{n}2^{-k}|m(g_{k})-m'(g_{k})|.
\]
Set $2r=\inf \{d(m, m_{\epsilon}): m\in \mathcal{P}(SM)\backslash O\}$.
We have $r>0$, since $\mathcal{P}(SM)\backslash O$ is a compact
subset of $\mathcal{P}(SM)$.
Since for all $k$, $\|g_{k}\|=1$, we have $0\leq d(m,
m')-d_{n}(m, m')\leq 2^{-(n-1)}$. Thus, for $n$ suffuciently
large,
\[
O_{\epsilon, r}:=\{m\in \mathcal{P}(SM):d_{n}(m, m_{\epsilon})<r\}
\subset O. 
\]
For each $\alpha =(\alpha_{1}, \ldots, \alpha_{n})\in \real^{n}$
write $\|\alpha\|_{n}=\sum_{k=1}^{n}2^{-k}|\alpha_{k}|$. Set
$\alpha_{\epsilon}:=(\int
g_{1}dm_{\epsilon},\ldots, \int g_{n}dm_{\epsilon})=m_{\epsilon}(g^{(n)})$ and
\[
O_{n,r}:=\{\alpha \in \real^{n}: \|\alpha_{\epsilon}-\alpha \|_{n}<r\}.
\]
Then, $g^{(n)}(O_{\epsilon, r})=O_{n,r}\cap g^{(n)}(\mathcal{P}(SM))$.
From Theorem $2$ ($2$) we get, 
\begin{eqnarray*}
&& \liminf_{T\rightarrow +\infty }\frac{1}{T}\log \frac{\int \left(
\sum_{\gamma _{xy}\in G_{\delta,T}:
  \delta_{\gamma_{xy}}\in O} 
e^{\int_{\gamma _{xy}}F}\right) dxdy}{\int \left( \sum_{\gamma
_{xy} \in G_{\delta,T}}e^{\int_{\gamma
_{xy}}F}\right) dxdy}\\
&\geq& 
\liminf_{T\rightarrow +\infty }\frac{1}{T}\log \frac{\int \left(
\sum_{\gamma _{xy}\in G_{\delta,T}:  \delta_{\gamma_{xy}}\in
  O_{\epsilon, r} 
}e^{\int_{\gamma _{xy}}F}\right) dxdy}{\int \left( \sum_{\gamma
_{xy} \in G_{\delta,T}}e^{\int_{\gamma
_{xy}}F}\right) dxdy}\\
&=& \liminf_{T\rightarrow +\infty }\frac{1}{T}\log \frac{\int \left(
\sum_{\gamma _{xy}\in G_{\delta,T}:
\delta_{\gamma_{xy}}(g^{(n)})\in O_{n, r}
}e^{\int_{\gamma _{xy}}F}\right) dxdy}{\int \left( \sum_{\gamma
_{xy} \in G_{\delta,T}}e^{\int_{\gamma_{xy}}F}\right) dxdy}\\
&\geq& -J_{g}(O_{n, r})\\
&\geq& -J_{g}(\alpha_{\epsilon})
\geq -J(m_{\epsilon}) \geq -J(O) -\epsilon,
\end{eqnarray*}
for any $\epsilon >0$. This completes the proof of the main Theorem 1.
\end{proof}

\subsection{Proof of Theorem $3$}
\begin{proof} We have to show that the weak limits of
\[
m_{\delta,T}:=\frac{\int \left ( \sum_{\gamma _{xy}\in G_{\delta,T}}
e^{\int_{\gamma_{xy}}F}\delta_{\gamma_{xy}}\right 
)dxdy}{\int \left ( \sum_{\gamma _{xy}\in
G_{\delta,T}}e^{\int_{\gamma_{xy}}F}\right )dxdy},
\]are contained in $\mathcal{P}_{e}(F)$.

Let $V\subset \mathcal{P}(SM)$ be a convex open 
neighborhood of $\mathcal{P}_{e}(F)$ and $\epsilon >0$.
We consider a finite open cover $(B_{i}(\epsilon))_{i\leq N}$ of
$\mathcal{P}_{e}(F)$ by balls of diameter $\epsilon$ all contained in $V$. 
Define the measures on $SM$,
\[
m_{T,V}:=\frac{\int \left ( \sum_{\gamma _{xy}\in G_{\delta,T}:
\delta_{\gamma_{xy}} \in V}
e^{\int_{\gamma_{xy}}F}\delta_{\gamma_{xy}}\right 
)dxdy}{\int \left ( \sum_{\gamma _{xy}\in
G_{\delta,T}}e^{\int_{\gamma_{xy}}F}\right )dxdy}.
\]
Decompose the set $U:=\cup_{i=1}^{N}B_{i}(\epsilon)$ in a disjoint
union as follows,
\[
U=\cup_{j=1}^{N'}U_{j}^{\epsilon},
\]where the sets $U_{j}^{\epsilon}$ are disjoints and contained in one
of the balls $(B_{i}(\epsilon))_{i\leq N}$. We have
\[
\mathcal{P}_{e}(F) \subset U \subset V.
\] 
We fix in each $U_{j}^{\epsilon}$ a probability measure $m_{j}$,
$j\leq N'$, and let $m_{0}$ be a probability  measure distinct from the above
ones (for example take $m_{0} \in V\backslash U$).

Set as usual,
\begin{equation}
\nu_{T}(E):=\frac{\int \left ( \sum_{\gamma _{xy}\in G_{\delta,T}:
\delta_{\gamma_{xy}} \in E}
e^{\int_{\gamma_{xy}}F}\right 
)dxdy}{\int \left ( \sum_{\gamma _{xy}\in
G_{\delta,T}}e^{\int_{\gamma_{xy}}F}\right )dxdy},
\end{equation}
and define,
\begin{equation}
\beta_{T}=
\sum_{j=1}^{N'}\nu_{T}(U_{j}^{\epsilon})m_{j}+
(1-\nu_{T}(U))m_{0}. 
\end{equation}
We have $\sum_{j=1}^{N'}\nu_{T}(U_{j}^{\epsilon})=\nu_{T}(U)$.
The probability measure $\beta_{T}$ lies in $V$ since it is a convex
combination of elements in the convex set $V$. We have then
$d(m_{\delta,T}, V)\leq d(m_{\delta,T},\beta_{T})$. We
will show that
\[
d(m_{\delta,T},\beta_{T})\leq \epsilon \nu_{T}(U)+\frac{3}{2}\nu_{T}(U^{c}),
\]
where $U^{c}=\mathcal{P}(SM)\backslash U$ which is closed.

By definition of $m_{\delta,T}$ and $m_{T,V}$ and
the fact that $U\subset V$,
\[
\sum_{k\geq 1}2^{-k}|m_{\delta,T}(g_{k})-m_{T,V}(g_{k})|\leq
\frac{1}{2} \nu_{T}(U^{c}).
\]
It remains to show that $d(m_{T,V},\beta_{T}) \leq \epsilon
\nu_{T}(U)+\nu_{T}(U^{c})$. 
We have for all $k\geq 1$,
\[
|m_{T,V}(g_{k})-\beta_{T}(g_{k})|\leq A+B+C
\]where,
\[
A=
\frac{\sum_{j=1}^{N'}\int \left ( \sum_{\gamma _{xy}\in G_{\delta,T}:
\delta_{\gamma _{xy}}\in U_{j}^{\epsilon}}
e^{\int_{\gamma_{xy}}F}|\delta_{\gamma_{xy}}(g_{k})-m_{j}(g_{k})| 
\right )
dxdy}{\int \left ( \sum_{\gamma _{xy}\in
G_{\delta,T}}e^{\int_{\gamma_{xy}}F}\right )dxdy},
\]
\[
B=\frac{\int(\sum_{\gamma _{xy}\in G_{\delta,T} :
\delta_{\gamma _{xy}}\in V\backslash U}
e^{\int_{\gamma_{xy}}F}\delta_{\gamma _{xy}}(g_{k}))dxdy}
{\int \left ( \sum_{\gamma _{xy}\in
G_{\delta,T}}e^{\int_{\gamma_{xy}}F}\right )dxdy},
\]
\[
C=|(1-\nu_{T}(U))m_{0}(g_{k})|.
\]
Thus, since we have for all $k\geq 1$, $\|g_{k}\|=1$,  by definition
of $\nu_{T}$ $(15)$ we get,
\begin{eqnarray*}
&&\sum_{k\geq 1}2^{-k}|m_{T,V}(g_{k})-\beta_{T}(g_{k})|\\
&\leq& \epsilon
  \sum_{j=1}^{N'}\nu_{T}(U_{j}^{\epsilon})+\frac{1}{2}\nu_{T}(U^{c})+
\frac{1}{2}(1-\nu_{T}(U))\\
&=& \epsilon \nu_{T}(U)+\nu_{T}(U^{c}).
\end{eqnarray*}
Finally we have obtained that
\[
d(m_{\delta,T},\beta_{T})\leq \epsilon
\nu_{T}(U)+\frac{3}{2}\nu_{T}(U^{c}). 
\]
This implies the desired inequality,
\[
d(m_{\delta, T},V)\leq \epsilon
\nu_{T}(U)+\frac{3}{2}\nu_{T}(U^{c}). 
\]
By Corollary $1$, since $U^{c}$ is closed, we know that $\lim_{T\rightarrow
  \infty}\nu_{T}(U)=1$. Thus, $\limsup_{T\rightarrow
  \infty}d(m_{\delta,T},V)\leq \epsilon$, for all $\epsilon >0$. We
  conclude that $\limsup_{T\rightarrow
  \infty}d(m_{\delta,T},V)=0$. The neighborhood $V$ of
  $\mathcal{P}_{e}(F)$ being arbitrary, this implies that all limit
  measures of $m_{\delta,T}$ are contained in $\mathcal{P}_{e}(F)$. In
  particular, if $\mathcal{P}_{e}(F)$ is reduced to one measure $\mu$, this
  shows that $m_{T}$ converges to $\mu$.
\end{proof}
\subsection{Proof of Theorem $5$}

\subsubsection{Proof of Part $1$}
\begin{proof}
Set for any $E\subset \mathcal{P}(SM)$,
\[
\nu_{T}(E)=\frac{\int_{M\times M}
(\sum_{\gamma_{xy}\in G_{T}:\delta_{\gamma_{xy}}\in E}
e^{\int_{\gamma_{xy}}F})dxdy} 
{\int_{M\times M}(\sum_{\gamma_{xy}\in G_{T}}
e^{\int_{\gamma_{xy}}F})dxdy}.
\]
Let $\epsilon >0$. There exists a finite number of continuous
functions $\omega_{1}, \cdots, \omega_{l}$ such that $K\subset
\cup_{i=1}^{l}K_{i}$, where 
\[
K_{i}=\{m\in \mathcal{P}(SM): \int \omega_{i}dm-Q(\omega_{i})>J(K)-\epsilon\}.
\]
We can suppose that all the $\omega_{i}$'s are non negative since adding a
constant $c>0$ we have,
\[
\int (\omega_{i}+c)dm-Q(\omega_{i}+c)=\int \omega_{i}dm-Q(\omega_{i}).
\]
Put
\[
\Gamma_{i}(x,y,T):=\{\gamma_{xy}\in
G_{T}:\delta_{\gamma_{xy}}\in K_{i}\},  
\]and
\[
Z_{i}(T):=\int \sum_{\gamma_{xy}\in
  \Gamma_{i}(x,y,T)}e^{\int_{\gamma_{xy}}F}dxdy. 
\]
From the definition of $\Gamma_{i}(x,y,T)$ we get,
\[
Z_{i}(T)\leq \int \sum_{\gamma_{xy}\in \Gamma_{i}(x,y,T)}
e^{\int_{\gamma_{xy}}F}e^{l(\gamma_{xy})(\int
  \omega_{i}d\delta_{\gamma_{xy}}-
Q(\omega_{i})-(J(K)-\epsilon))}dxdy.
\]
We decompose the interval $[0, T]$ into subintervals $[T-(j+1)\delta,
  T-j\delta]$ and set 
\[
\Gamma_{ij}(x,y,T):=\{\gamma_{xy}:T-(j+1)\delta<l(\gamma_{xy}) \leq
T-j\delta, \  \delta_{\gamma_{xy}}\in K_{i}\}.  
\]
Since the functions $\omega_{i}$ were supposed non negative, we have
$Q(\omega_{i})\geq 0$ and then $Q(\omega_{i})+J(K) \geq 0$. Thus, from
Theorem $9$ $(1)$ and for $T$ sufficiently large,  
\begin{eqnarray*}
Z_{i}(T)&\leq& \int \sum_{\gamma_{xy}\in \Gamma_{i}(x,y,T)}
e^{\int_{\gamma_{xy}}(F+\omega_{i})}e^{l(\gamma_{xy}) \left ( 
-Q(\omega_{i})-(J(K)-\epsilon)\right )}dxdy\\
&\leq& \sum_{j}e^{(T-(j+1)\delta)\left (-Q(\omega_{i})-(J(K)-\epsilon)\right )}
\int \sum_{\gamma_{xy}\in \Gamma_{ij}(x,y,T)}
e^{\int_{\gamma_{xy}}(F+\omega_{i})}dxdy\\
&\leq& \sum_{j} e^{(T-(j+1)\delta)\left
(-Q(\omega_{i})-(J(K)-\epsilon)\right )} 
e^{(T-j\delta)(P(F+\omega_{i})+\epsilon)}\\
&=& \sum_{j}e^{(T-j\delta)(P(F)-J(K)+2\epsilon)}
e^{-\delta(-Q(\omega_{i})-(J(K)-\epsilon))}.
\end{eqnarray*}
We assumed that $J(K)< P(F)$, then
$C:=\sum_{j}e^{-j\delta (P(F)-J(K)+2\epsilon)}<\infty$. Setting
$\lambda_{i}:=e^{-\delta(-Q(\omega_{i})-(J(K)-\epsilon))}$, we get
for $T$ sufficiently large and Theorem $9$ $(2)$ ($P(F)> 0$),
\begin{eqnarray*}
\nu_{T}(K) &\leq& 
 \frac{\sum_{i=1}^{l}Z_{i}(x,y,T)}{\int  
\sum_{\gamma_{xy}\in G_{T}} e^{\int_{\gamma_{xy}}F}dxdy}\\
&\leq&
\sum_{i=1}^{l}\sum_{j}
e^{T(-J(K)+3\epsilon)}\lambda_{i}e^{-j\delta (P(F)-J(K)+2\epsilon)}\\
&=&e^{T(-J(K)+3\epsilon)}C\sum_{i=1}^{l}\lambda_{i} . 
\end{eqnarray*}
Take the logarithme, divide by $T$ and take the $\limsup$,
\[
\limsup_{T\rightarrow \infty}\frac{1}{T}\log \nu_{T}(K) \leq -J(K)+3\epsilon.
\]
$\epsilon$ being arbitrary, this proves Theorem $5$ $(1)$.
\end{proof}

\subsubsection{Proof of Part $2$}
\begin{proof} As for the proof of part $(2)$ of Theorem $1$, the proof
will be a consequence of the contraction principle,
\[
\liminf_{T\rightarrow \infty}
\frac{\int \left(
\sum_{\gamma _{xy}\in G_{T}:
\delta_{\gamma _{xy}}(g)\in O_{n} 
}e^{\int_{\gamma _{xy}}F}\right) dxdy}{\int \left( \sum_{\gamma
_{xy} \in G_{T}}e^{\int_{\gamma
_{xy}}F}\right) dxdy} \geq -J_{g}(O_{n}).
\]
We follow the lines of the proof
of Theorem $2$ $(2)$ with the same notations. 
Let $\delta >0$. We have,
\[
\frac{\int \left(
\sum_{\gamma _{xy}\in G_{T}:
\delta_{\gamma _{xy}}(g)\in O_{n} 
}e^{\int_{\gamma _{xy}}F}\right) dxdy}{\int \left( \sum_{\gamma
_{xy} \in G_{T}}e^{\int_{\gamma
_{xy}}F}\right) dxdy} \geq
 \frac{\int \left(
\sum_{\gamma _{xy}\in \Gamma_{T}(O_{n})
}e^{\int_{\gamma _{xy}}F}\right) dxdy}{\int \left( \sum_{\gamma
_{xy} \in G_{T}}e^{\int_{\gamma
_{xy}}F}\right) dxdy},
\]
where,
\[
\Gamma_{T}(O_{n}):=
\{\gamma _{xy}\in G_{\delta,T}:\delta_{\gamma _{xy}}(g)\in O_{n}\}.
\]
Set
\[
Z_{T}(O_{n}):=
\frac{\int \left(
\sum_{\gamma _{xy}\in \Gamma_{T}(O_{n})}e^{\int_{\gamma
    _{xy}}F}\right) dxdy}{\int \left( \sum_{\gamma _{xy} \in
    G_{T}}e^{\int_{\gamma _{xy}}F}\right) dxdy}.
\]
Then $ Z_{T}(O_{n}) \geq Z_{T}(O_{n,r})$ and (see $(13)$),
\begin{eqnarray*}
&&\frac{1}{T}\log Z_{T}(O_{n})\\
&\geq &\frac{1}{T}\log Z_{T}(O_{n,r})\\
&\geq& 
-r\|\beta_{\varepsilon}\|-
\frac{\delta \|\beta_{\varepsilon} \cdot g \|_{\infty}}{T}+ 
(Z_{T}(\beta_{\varepsilon}\cdot g)-\beta_{\varepsilon}\cdot
\alpha_{\varepsilon})+ 
\frac{1}{T}\log Z_{T}^{\varepsilon}(O_{n,r}), 
\end{eqnarray*}
where we have set,
\[
Z_{T}^{\varepsilon}(O_{n,r}):=
\frac{\int (\sum_{\gamma_{xy}\in\Gamma_{T}(O_{n,r})}
e^{\int_{\gamma_{xy}}(F+\beta_{\varepsilon}\cdot g)})dxdy} 
{\int ( \sum_{\gamma_{xy} \in G_{\delta,T}}
e^{\int_{\gamma_{xy}}(F+\beta_{\varepsilon}\cdot g)})dxdy},  
\]
and
\[
Z_{T}(\beta_{\varepsilon}\cdot g):=\frac{1}{T}\log 
\frac{\int (\sum_{\gamma_{xy}\in G_{\delta,T}}
e^{\int_{\gamma_{xy}}(F+\beta_{\varepsilon}\cdot g)})dxdy} 
{\int \sum_{\gamma_{xy} \in G_{T}}
e^{\int_{\gamma_{xy}}F}dxdy}.
\]
From Theorem $9$ $(1)$ and $(2)$ respectively,
\[
\lim_{T\rightarrow
  \infty}Z_{T}(\beta_{\varepsilon}\cdot g)=
P(F+\beta_{\varepsilon}\cdot g)-P(F)=Q(\beta_{\varepsilon}\cdot g).
\]
We proved in $(14)$ that $\lim_{T\rightarrow
  \infty}Z_{T}^{\varepsilon}(O_{n,r}) =1$. Thus,
\begin{eqnarray*}
&&\liminf_{T\rightarrow \infty}\frac{1}{T}\log 
\frac{\int \left(
\sum_{\gamma _{xy}\in G_{T}:
\delta_{\gamma _{xy}}(g)\in O_{n}} 
e^{\int_{\gamma _{xy}}F}\right) dxdy}{\int \left( \sum_{\gamma
_{xy} \in G_{T}}e^{\int_{\gamma
_{xy}}F}\right) dxdy} \\
&\geq&\liminf_{T\rightarrow \infty}\frac{1}{T}\log 
\frac{\int \left(
\sum_{\gamma _{xy}\in G_{\delta,T}:
\delta_{\gamma _{xy}}(g)\in O_{n}} 
e^{\int_{\gamma _{xy}}F}\right) dxdy}{\int \left( \sum_{\gamma
_{xy} \in G_{T}}e^{\int_{\gamma
_{xy}}F}\right) dxdy}\\
&\geq&
-r\|\beta_{\varepsilon}\|+
 (Q(\beta_{\varepsilon}\cdot g)-\beta_{\varepsilon}\cdot
\alpha_{\varepsilon})+\lim_{T\rightarrow \infty}\frac{1}{T}\log
 Z_{T}^{\varepsilon}(O_{n,r})\\
&=& -r\|\beta_{\varepsilon}\|-I_{g}(\alpha_{\varepsilon}) \\
&\geq& -r\|\beta_{\varepsilon}\|-J_{g}(O_{n}) -\varepsilon, 
\end{eqnarray*}
for any $\varepsilon >0$. Since $r>0$ was arbitray choosen, we let
$r\rightarrow 0$ and $\varepsilon \rightarrow 0$ respectively, 
this completes the proof of the contraction principle.
\end{proof}

\subsection{Proof of Theorem $4$}
\begin{proof} We have to prove that the weak limits of
\[
m_{T}:=\frac{\int \left ( \sum_{\gamma _{xy}\in
    G_{T}}e^{\int_{\gamma_{xy}}F}\delta_{\gamma _{xy}}\right )dxdy}
{\int \left ( \sum_{\gamma _{xy}\in
G_{T}}e^{\int_{\gamma_{xy}}F}\right )dxdy}
\]are in $\mathcal{P}_{e}(F)$. For this and in order to follow the
proof of Theorem $3$, we must show that we are able to
apply  Theorem $5$ $(1)$ to any open and convex
neighborhood $V$ of $\mathcal{P}_{e}(F)$, $V\subset \mathcal{P}(SM)$.
There are minor changes due to the
conditions $P(F)> 0$ and $J(K)<P(F)$ in Theorem
$5$ $(1)$.
Let $\nu \in \mathcal{P}_{e}(F)$ and $\mu$ a probability measure which
is not invariant, i.e a non invariant element in $\mathcal{P}(SM)
\backslash \mathcal{P}_{e}(F)$.
We have $J(\nu)=0$ and $J(\mu)>0$ (by
Remark $1$). Consider a convex sum of these two measures, $m=\alpha
\mu +\beta \nu$, where $\alpha + \beta =1$ ($\alpha \ne 0$ and $\beta
\ne 0$). Observe
that $m\notin \mathcal{P}_{e}(F)$. Indeed, assume that $m$ lies in
$\mathcal{P}_{e}(F)$. In particular it is then invariant. But then,
since $\nu$ is invariant by assumption, for
all $t$ we will have
\[
\alpha \mu+ \beta \nu=\alpha \mu\circ \phi_{t} +\beta \nu \circ \phi_{t}=
\alpha \mu\circ \phi_{t} +\beta \nu.
\]  
Thus $\mu =\mu\circ \phi_{t}$ for all $t$, which is a contradiction.

From the convexity of $J$ we get, $J(m)\leq \alpha J(\mu)$.  
Let $V\subset \mathcal{P}(SM)$ be an open and convex
small neighborhood of $\mathcal{P}_{e}(F)$ such that $m\in
V^{c}:=K$ (note that in particular, for all other contained small
neighborhood $V$ we will have $m \in V^{c}$). Therefore,
\begin{equation}
\inf_{k\in K} J(k)\leq J(m)\leq \alpha J(\mu).
\end{equation}
Now, since we have assumed that $P(F)>0$
then for a sufficiently small $\alpha >0$ we get from $(17)$,
\[
J(K):=\inf_{k\in K} J(k)\leq P(F).
\]
Thus $P(F)-J(K)\geq 0$. We can then apply Theorem $5$ $(1)$ to
the closed sets $K=V^{c}$ and conclude with the proof of Theorem $3$.
\end{proof}
\subsection{Proof of Theorem $6$}
\subsubsection{Proof of Part $(1)$}
\begin{proof}
It is essentially the proof of Theorem $1$ $(1)$ with the following
modifications since it is in part based on Theorem $8$ and the
following lemma (see \cite{Ma} Lemma 4.3, \cite{Pate} Lemma 3.33 p$68$).
\begin{lemma}
Let $(X, \mathcal{A}, \mu)$ be a probability space, and $f_{n} :
X\rightarrow (0, +\infty)$ a sequence of integrable functions. Then
for $\mu$ a.e $x\in X$
\[
\limsup_{n\rightarrow \infty} \frac{1}{n}\log f_{n}(x)
\leq \limsup_{n\rightarrow \infty} \frac{1}{n}\log \int_{X} f_{n}d\mu.
\]
\end{lemma}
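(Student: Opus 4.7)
The plan is to reduce the pointwise bound to a summability estimate via Markov's inequality and then invoke the Borel--Cantelli lemma. Let $L := \limsup_{n\to\infty} \frac{1}{n}\log \int_X f_n\,d\mu$. If $L = +\infty$ there is nothing to prove, so I assume $L < +\infty$. It will suffice to show that for every $\epsilon > 0$ the set
\[
X_\epsilon := \Bigl\{ x \in X : \limsup_{n\to\infty} \tfrac{1}{n}\log f_n(x) \leq L + \epsilon \Bigr\}
\]
has full $\mu$-measure, because then $\bigcap_{k\geq 1} X_{1/k}$ is again a full-measure set on which the desired inequality $\limsup_n \frac{1}{n}\log f_n(x) \leq L$ holds.

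To prove $\mu(X_\epsilon) = 1$ for a fixed $\epsilon > 0$, I apply Markov's inequality to the positive integrable function $f_n$:
\[
\mu\bigl(\{x : f_n(x) > e^{n(L+\epsilon)}\}\bigr) \leq e^{-n(L+\epsilon)} \int_X f_n\,d\mu.
\]
By the definition of $L$, there exists $N(\epsilon)$ such that for all $n \geq N(\epsilon)$ one has $\int_X f_n\,d\mu \leq e^{n(L+\epsilon/2)}$, whence
\[
\mu\bigl(\{x : f_n(x) > e^{n(L+\epsilon)}\}\bigr) \leq e^{-n\epsilon/2}.
\]
Summing in $n$ yields a convergent geometric series, so Borel--Cantelli implies that for $\mu$-a.e.\ $x$ the inequality $f_n(x) \leq e^{n(L+\epsilon)}$ holds for all but finitely many $n$. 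Taking the logarithm, dividing by $n$, and passing to the $\limsup$ gives $\limsup_n \frac{1}{n}\log f_n(x) \leq L + \epsilon$, i.e.\ $\mu(X_\epsilon) = 1$.

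The proof is essentially a one-line Markov--Borel--Cantelli argument, so there is no real obstacle. The only point of care is the standard diagonal step of intersecting the countable family $\{X_{1/k}\}_{k\geq 1}$ to produce a single $\mu$-null exceptional set independent of $\epsilon$. This is the classical probabilistic mechanism used in \cite{Ma}, Lemma 4.3, and in Paternain's book, which are the references cited by the paper for this lemma.
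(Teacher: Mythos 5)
Your proof is correct and is the standard Markov--Borel--Cantelli argument; the paper itself does not prove this lemma but only cites Ma\~n\'e (Lemma 4.3) and Paternain's book (Lemma 3.33), where essentially the same argument is given. One small point you gloss over: ``$L<+\infty$'' does not rule out $L=-\infty$, in which case $e^{n(L+\epsilon)}$ is ill-defined; but the same Markov--Borel--Cantelli step with threshold $e^{-nM}$ for arbitrary $M>0$ (using that eventually $\int_X f_n\,d\mu\leq e^{-2nM}$) shows $\limsup_n\frac{1}{n}\log f_n(x)\leq -M$ a.e., and intersecting over integer $M$ handles that case too.
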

Set $F=0$ and proceed as in the proof of Theorem $1$ $(1)$ with the
same notations. We have $Q(\omega)=P(\omega)-h_{top}$ and,
\begin{eqnarray*}
Z_{i}(x,y,T) &\leq& \sum_{\gamma_{xy}\in \Gamma_{i}(x,y,T)}
e^{\int_{\gamma_{xy}}\omega_{i}}e^{l(\gamma_{xy})\left ( 
-Q(\omega_{i})-(J(K)-\epsilon)\right )}\\
&\leq& C e^{T\left ( 
-Q(\omega_{i})-(J(K)-\epsilon)\right )}\sum_{\gamma_{xy}\in \Gamma_{i}(x,y,T)}
e^{\int_{\gamma_{xy}}\omega_{i}}.
\end{eqnarray*}
We have
\[
\nu_{T}(K) \leq \sum_{i=1}^{l} \frac{Z_{i}(x,y,T)}{\# G_{\delta,T}}.
\]
For all $(x,y)$ and $T$ sufficiently large (depending on $(x,y)$), it
follows from Theorem $8$ $(2)$ that, 
\[
\#G_{\delta,T} \geq e^{T(h_{top}-\epsilon)}.
\]
On the other hand it follows from Lemma $2$ above (which can be
applied to continuous time) and Theorem $9$ $(1)$,
\begin{eqnarray*}
&&\limsup_{T\rightarrow \infty} \frac{1}{T}\log
\sum_{\gamma_{xy}\in \Gamma_{i}(x,y,T)}
e^{\int_{\gamma_{xy}}\omega_{i}}\\
&\leq& \limsup_{T\rightarrow \infty} \frac{1}{T}\log
\sum_{\gamma_{xy}\in  G_{\delta,T} }
e^{\int_{\gamma_{xy}}\omega_{i}}\\
&\leq& \limsup_{T\rightarrow \infty} \frac{1}{T}\log
\int \left (\sum_{\gamma_{xy}\in G_{\delta,T}}
e^{\int_{\gamma_{xy}}\omega_{i}}\right )dxdy\\
&\leq& P(\omega_{i}),
\end{eqnarray*}
for $(x,y)$ in a subset $B_{i}$ of $M\times M$ of full Lebesgue
measure and $i\leq l$.
The set $B_{i}$ can be taken independent from $i$ for evident
reasons, and then we will say that the above inequalities hold for a.e
$(x,y)$ in a set $B$. However, note that $B$ depends on the set of
functions $\{\omega_{i}, i\leq l\}$ which means that $B$ depends on $K$.
The proof can be now achieved similarily : let $(x,y)\in B$ fixed and $T$
sufficiently large (depending on $(x,y)$),
\begin{eqnarray*}
\nu_{T}(K)& \leq& \sum_{i=1}^{l} \frac{Z_{i}(x,y,T)}{ 
\# G_{\delta,T}}\\
&\leq&
C\sum_{i=1}^{l}e^{T(P(\omega_{i})+\epsilon)}e^{-T(h_{top}-\epsilon)}
e^{T(-Q(\omega_{i})-(J(K)-\epsilon)}\\
&=&Cl e^{T(-J(K)+3\epsilon)}.
\end{eqnarray*}
Thus $\limsup_{T\rightarrow \infty} \frac{1}{T}\log \nu_{T}(K)
\leq -J(K)$.
\end{proof}
\subsubsection{Proof of Part $(2)$}
\begin{proof} Set
\[
\nu_{T}(E)=\frac{\#\{\gamma_{xy}\in G_{\delta,T}:\delta_{\gamma_{xy}}\in E\}} 
{\# G_{\delta,T}},
\]
and
\[
\mu_{T,V}(x,y):=\frac{\sum_{\gamma_{xy}\in
  G_{\delta,T}:\delta_{\gamma_{xy}}\in V}\delta_{\gamma_{xy}}}  
{\# G_{\delta,T}}.
\]
Thus, proceeding as in the proof of Theorem $3$, we get for any open and
convex neighborhood $V$ of $\mathcal{P}_{e}(0)$ and all $(x,y)$,
\begin{equation}
d(\mu_{T}(x,y),V)\leq \epsilon \nu_{T}(U)+\frac{3}{2}\nu_{T}(U^{c}).
\end{equation}
Let $(V_{i})_{i\geq 1}$ be a decreasing sequence of sets of the type $V$
such that $\cap_{i\geq 1}V_{i}=\mathcal{P}_{e}(0)$.
For $(x,y)$ in a set $B_{V_{i}}$ of full Lebesgue measure we get from
Theorem $6$ $(1)$ and $(18)$,
\[
\limsup_{T\rightarrow \infty}d(\mu_{T}(x,y),V_{i})\leq \epsilon,
\]  
for all $\epsilon >0$. Thus $\limsup_{T\rightarrow
  \infty}d(\mu_{T}(x,y),V_{i})=0$. Therefore, if $\mu(x,y)$ is a weak limit of
  $\mu_{T}(x,y)$, $(x,y)\in B_{V_{i}}$, we will have $d(\mu(x,y),V_{i})=0$. 
Also, there exists a set $B$ of full Lebesgue measure where we have
  $d(\mu(x,y),V_{i})=0$ for all $i\geq 1$. From this we deduce that
  $\mu(x,y) \in \mathcal{P}_{e}(0)$ for $(x,y)\in B$.
\end{proof}

\subsection{Proof of Theorem $7$}
\subsubsection{Proof of Part $(1)$}
\begin{proof}
The proof adapts the arguments of the proof of Theorem $5$ $(1)$.
Set for any $E\subset \mathcal{P}(SM)$,
\[
\nu_{T}(E)=\frac{\#\{\gamma_{xy}\in G_{T}:\delta_{\gamma_{xy}}\in E\}} 
{\# G_{T}}.
\]
Recall that the functional $Q$ corresponding to the potential $F=0$ is
given by $Q(\omega)=P(\omega)-h_{top}$.

Let $\epsilon >0$. There exists a finite number of continuous
functions $\omega_{1}, \cdots, \omega_{l}$ such that $K\subset
\cup_{i=1}^{l}K_{i}$, where 
\[
K_{i}=\{m\in \mathcal{P}(SM): \int \omega_{i}dm-Q(\omega_{i})>J(K)-\epsilon\}.
\]
Again we can suppose that all the $\omega_{i}$'s are non negative.

Put
\[
\Gamma_{i}(x,y,T):=\{\gamma_{xy}\in
G_{T}:\delta_{\gamma_{xy}}\in K_{i}\},  
\]and
\[
Z_{i}(x,y,T):=\# \Gamma_{i}(x,y,T). 
\]
From the definition of $\Gamma_{i}(x,y,T)$ we get,
\[
Z_{i}(x,y,T)\leq \sum_{\gamma_{xy}\in \Gamma_{i}(x,y,T)}
e^{l(\gamma_{xy})(\int
  \omega_{i}d\delta_{\gamma_{xy}}-
Q(\omega_{i})-(J(K)-\epsilon))}.
\]
Set 
\[
\Gamma_{ij}(x,y,T):=\{\gamma_{xy}:T-(j+1)\delta<l(\gamma_{xy}) \leq
T-j\delta, \  \delta_{\gamma_{xy}}\in K_{i}\}.  
\]
Since the functions $\omega_{i}$ were supposed non negative, we have
$Q(\omega_{i})\geq 0$ and then $Q(\omega_{i})+J(K) \geq 0$. Thus, from
Theorem $9$ $(1)$ and for $T$ sufficiently large,  
\begin{eqnarray*}
Z_{i}(x,y,T)&\leq& \sum_{\gamma_{xy}\in \Gamma_{i}(x,y,T)}
e^{\int_{\gamma_{xy}}\omega_{i}}e^{l(\gamma_{xy}) \left ( 
-Q(\omega_{i})-(J(K)-\epsilon)\right )}\\
&\leq& \sum_{j}e^{(T-(j+1)\delta)\left (-Q(\omega_{i})-(J(K)-\epsilon)\right )}
\sum_{\gamma_{xy}\in \Gamma_{ij}(x,y,T)}
e^{\int_{\gamma_{xy}}\omega_{i}}\\
&\leq& \sum_{j} e^{(T-(j+1)\delta)\left
(-Q(\omega_{i})-(J(K)-\epsilon)\right )} 
e^{(T-j\delta)(P(\omega_{i})+\epsilon)}\\
&=& \sum_{j}e^{(T-j\delta)(h_{top}-J(K)+2\epsilon)}
e^{-\delta(-Q(\omega_{i})-(J(K)-\epsilon))}.
\end{eqnarray*}
We assumed that $J(K)< h_{top}$, then
$C:=\sum_{j}e^{-j\delta (h_{top}-J(K)+2\epsilon)}<\infty$. Setting
$\lambda_{i}:=e^{-\delta(-Q(\omega_{i})-(J(K)-\epsilon))}$, we get
for $T$ sufficiently large and Theorem $8$ $(2)$,
\begin{eqnarray*}
\nu_{T}(K) &\leq& 
\sum_{i=1}^{l} \frac{Z_{i}(x,y,T)}{\# G_{T}}\\
&\leq&
\sum_{i=1}^{l}\sum_{j}
e^{T(-J(K)+3\epsilon)}\lambda_{i}e^{-j\delta (h_{top}-J(K)+2\epsilon)}\\
&=&e^{T(-J(K)+3\epsilon)}C\sum_{i=1}^{l}\lambda_{i} . 
\end{eqnarray*}
Take the logarithme, divide by $T$ and take the $\limsup$,
\[
\limsup_{T\rightarrow \infty}\frac{1}{T}\log \nu_{T}(K) \leq -J(K)+3\epsilon.
\]
$\epsilon$ being arbitrary, this proves Theorem $7$ $(1)$.
\end{proof}
\subsubsection{Proof of Part $(2)$}
The proof follows readily from the proof of
Theorem $6$ $(2)$ and Theorem $7$ $(1)$.


\begin{thebibliography}{99}
\bibitem{Am1}  Amroun A. On the measures of maximal entropy for the geodesic
flow. Expo. Math. 23, 151-160, 2005.
\bibitem{Am2}  Amroun A. Equilibrium states for the geodesic
  flows. Math Zeitschrift. 252, 797-810, 2006. 
\bibitem{Be}  Berger M, Bott R. Sur les vari\'et\'es a courbure strictement
positive. Topology, Vol 1, 1962.
\bibitem{BG} Burns K. Gutkin E. Growth of the number of geodesics
  between points and insecurity for Riemannian manifolds. Dis Cont Dyn
  Sys, vol 21, N°2, 2008.
\bibitem{Bo}  Bowen R. The equidistribution of closed geodesics. Amer. J.
Math. 94, 413-423, 1972.
\bibitem{Bo}  Bowen R. Periodic orbits for hyperbolic flows. Amer. J. Math.
94, 1-30, 1972.
\bibitem{ch} Chengbo Yue. Rigidity and dynamics around manifolds of
  negative curvature. Math Res Letters 1, 123-147, 1994.
\bibitem{che} Chengbo Yue. The ergodic theory of discrete isometry
  groups on manifolds of variable negative curvature. Trans Ame Math
  Soc, Vol 348, Numb 12, 4965-5005, 1996.
\bibitem{DZ} Dembo A. Zeitouni O. Large deviations techniques and
  Applications. Springer, Application of Mathematics, vol 38, 1998.
\bibitem{Fe}  Federer H. Geometric measure theory. Springer, New York, 1969.
\bibitem{Ki}  Kifer Y. Large deviations in dynamical systems stochastic
processes. Trans. Amer. Math. Soc. 321, 505-525, 1988.
\bibitem{Kn} Knieper G. The uniquesness of the measure of maximal
  entropy for geodesic flow on rank 1 manifolds.
\bibitem{KO} Kozlovski O S. An integral formula for topological entropy of
$C^{\infty}$ maps. Ergodic Theory Dynam Systems. 18, 405-424, 1998.
\bibitem{Ma} Ma\~n\'e R. On the topological entropy of the
  geodesic flows. J Diff Geom 45, 74-93, 1989.
\bibitem{Mo}  Morgan F. Geometric measure theory. Academic Press 1988.
\bibitem{Ne}  Newhouse S. Continuity properties of entropy. Ann. Math. 129,
215-235, 1989.
\bibitem{Pa}  Parry W. Equilibrium states and weighted uniform distribution
of closed orbits, Dynamical Systems. LNM vol 1342, Springer, Berlin, 1988.
\bibitem{Pat}  Paternain G. P. Topological pressure for geodesic flows. Ann.
Scient. \'{E}c. Norm. Sup. 4eme s\'{e}rie, t 33, 121-138, 2000.
\bibitem{Pate}  Paternain G. P. Geodesic flows. Progress in
  mathematics, 180, Birkhauser, 2000.
\bibitem{Po}  Pollicott M. Closed geodesic distribution for manifolds of
nonpositive curvature. Disc. Cont. Dyn. Syst. Vol 2, Num 2, 1996.
\bibitem{Po1}  Pollicott M. Large deviations, Gibbs measures and closed
orbits for hyperbolic flows. Math. Z. 220, 219-230, 1995.
\bibitem{Pr} Przytycki F. An upper estimation for topological entropy
  of diffeomorphisms. 
Invent Math. 59, 205-213, 1980.
\bibitem{Roc} Rockafellar R T. Convex analysis. Princeton Univ Press,
  Princeton N J 1970.
\bibitem{ru} Ruelle D. Thermodynamical formalism. Encyclopedia of
  Math and its Appl, vol 5, Addison-Wesley, Reading, Mass, 1978.
\bibitem{Wa}  Walters P. An introduction to ergodic theory. Graduate Text in
Mathematics 79, Springer, Berlin 1982.
\end{thebibliography}
\end{document}